\patchcmd{\@setaddresses}{\indent}{\noindent}{}{}
\patchcmd{\@setaddresses}{\indent}{\noindent}{}{}
\patchcmd{\@setaddresses}{\indent}{\noindent}{}{}
\patchcmd{\@setaddresses}{\indent}{\noindent}{}{}
\numberwithin{equation}{section}
\theoremstyle{plain}
\newtheorem{theorem}{Theorem}[section]
\newtheorem{lemma}[theorem]{Lemma}
\newtheorem{prop}[theorem]{Proposition}
\newtheorem{corollary}[theorem]{Corollary}
\newtheorem{remark}[theorem]{Remark}
\theoremstyle{definition}
\newtheorem{df}[theorem]{Definition}
\newtheorem{example}[theorem]{Example}
\begin{document}

\title{The moment problem on compact sets of characters 
}

\author{Dragu Atanasiu}

\address{Faculty of Textiles, Engineering and Business\newline University of Bor\aa s\newline All\'egatan 1, 503 32, Bor\aa s\newline  Sweden}

\email{dragu.atanasiu@hb.se}

\subjclass[2020]{Primary 43A35, 44A60; Secondary 28C05}

\keywords{positive semidefinite function, moment problem, Radon measures}
\maketitle 
\begin{abstract}
In this paper we consider linear functionals on an unital commutative $\mathbb R$-algebra.
Recently has been  established a characterization for moment functionals, on compact sets of characters, depending only on the given functionals.

In this paper, we establish alternative characterizations.
For example, we obtain a characterization of a moment functional on a product of symmetric intervals  in which we do not assume that the functional is positive semidefinite but positive on an Archimedean semiring, and a characterization of a moment functional for which the compact support of the representing measure is a subset of the support of the representing measure in the characterization  mentioned at the beginning of the abstract.

We also establish a characterization of a moment functional which is positive on a Archimedean cone.

In connection  with the above characterization  we prove a Positivstellensatz for an Archimedean cone, which is neither a quadratic module nor a semiring.

At the end of the paper we consider an unital commutative semigroup $S$ with involution and characterize moment functions on compact sets of characters of $S$.
\end{abstract}

\section{Introduction}

Let $A$ be an unital commutative $\mathbb R$-algebra.
Let $X(A)$ be the set of all $\mathbb R$ algebras homomorphisms from $A$ to $\mathbb R$.
We assume that $X(A)$ is non-empty, and we endow $X(A)$ with the topology of pointwise convergence.
A linear functional $L:A\to \mathbb R$ 
is positive semidefinite if $L(a^2)\ge 0$ for all $a\in A$.

Let $K$ be a compact of $X(A)$. We say that the functional $L:A\to \mathbb R$ is a moment function on $K$ if there is a positive Radon measure $\mu $ on $K$
such that
$$L(a)=\int_K\alpha(a)d\mu(\alpha),a\in A.$$
The measure $\mu$ is called the representing measure for $L$.

Recently, Infusino et al.  have shown  in \cite{MSTP} that the linear functional $ L:A\to \mathbb R $ is a moment function on a compact of characters if and only if $L$ is positive semidefinite and  $\sup_{n\in \mathbb N}\sqrt[2n] {L(a^ {2n})}<\infty$ for all $a\in A$. Moreover, in this case, $L$ is a moment functional on the product of symmetric intervals
 $$K_L=\{\alpha \in X(A):|\alpha(a)|\le C_a\text { for all } a\in A\}$$
where $C_a=\sup_{n\in \mathbb N}\sqrt[2n] {L(a^ {2n})}$.

In Section 2 of this paper, we obtain an extension of the main result from \cite{MSTP} as a consequence of results from \cite{DA1} and \cite{DA2}.

The set $Q\subseteq A$ is a quadratic module if $1\in Q,Q+Q\subseteq Q$ and $A^ 2Q\subseteq Q$.
A quadratic module $Q$ is Archimedean if for every $a\in A$ there is a number $M_a>0$ such that $M_a\pm a\in Q$.

If $ L:A\to \mathbb R$ is a linear functional such that $L(1)=1 $ we denote
$$Q_L:=\{a\in A:L(b^ 2a)\ge 0\,\,\text{for all}\,\,b\in A\}.$$

Using the numbers $C_a,a\in A$ , we obtain, in Section 2 of the paper, a characterization of a functional, which is a moment functional on $K_L$ and where we do not assume that $L$ is positive semidefinite.

The set $S\subseteq A$ is a semiring if $S+S\subseteq S,S\cdot S\subseteq S$ and $\lambda\in S $ for every $\lambda\in \mathbb R,\lambda\ge 0.$

 The set $C\subseteq A$ is a cone if $C+C\subseteq C$ and $\lambda\cdot C\in \subseteq C $ for every $\lambda\ge 0.$

In \cite{SCH3} has been proved the  Positivstellensatz for an Archimedean semiring using the Positivstellensatz for an Archimedean quadratic module.

At the end of Section 2, we establish an  characterization of a functional, which is a moment functional on $K_L$ and where we do not assume that $L$ is positive semidefinite but positive on an Archimedean cone, which is neither a quadratic module nor a semiring, and then we prove a Positivstellensatz for this  cone.

 In Section 3, we establish the main results of this paper which are Theorem \ref{MRA} and Theorem \ref{gend_aD_a}. In  Theorem \ref{MRA} we establish an intrinsic  characterization  of the moment functionals, $ L:A\to \mathbb R $, on a compact set of characters where the support of the representing measure is contained in the set 
 $$D_L=\{\alpha \in X(A):d_a\le\alpha(a)\le D_a\text { for all }a\in A\},$$
 where $d_a,D_a$ are real numbers such that $-C_a\le d_a\le D_a\le C_a$ for all $a\in A$, which means that $D_L$ is a subset of $K_L$.
Theorem \ref{gend_aD_a} shows that the characterization from \cite{MSTP} and the  characterization from Theorem \ref{MRA} are, as expected, equivalent.

In Section 4 of this paper, we present applications of the characterizations from the previous sections.
For example, we prove a Berg-Maserick type theorem  and we discuss the moment problem on the ball  and the moment problem on a generalized simplex.

We also give, at the end of section 4, a proof of Schm\"udgen theorem concerning the moment problem on compact semi-algebraic sets, which use besides the ingredient of real algebraic geometry used in  \cite{SCH1}, the numbers $C_a$ and Theorem \ref{MR}.

In the last section of the paper, we consider an unital commutative semigroup $S$ with involution and characterize a positive semidefinite function, which is a moment function on a compact set of characters of $S$.

As an application, we relax the requirement, from \cite {ATZ} and \cite {DA2}, for a function, which is a moment function on a disc in $\mathbb C$.

\section{Intrinsic characterizations of moment functionals on a product of symmetric intervals }
In order to establish the main result of this section, we need the following theorem. We begin with a definition.
\begin{df}Let $A$ be an unital commutative $\mathbb R$-algebra. The function $v:A\to [0,\infty)$ is an absolute value if $v(ab)\le v(a)v(b)$ for all $a,b\in A$ and $v(1)=1$.

The functional $ L:A\to \mathbb R$ is $v-$bounded if there is a number $C>0$ such that 
$|L(a)|\le Cv(a),a\in A$.
\end{df}

\begin{theorem}\label{PreMR}Let $A$ be an unital commutative $\mathbb R$-algebra, a set  $Q\subseteq A$ and $v:A\to [0,\infty)$ an absolute value.

For a functional $ L:A\to \mathbb R$, the following conditions are equivalent:
\begin{enumerate}
\item[(i)]
$L$ is linear positive semidefinite , $v$-bounded and $Q\subseteq Q_L$;
\item[(ii)]there is an unique positive Radon measure $\mu$ on $K$ such that

$$L(a)=\int_{K}\alpha (a)d\mu(\alpha),a\in A$$
where

$$K=\{\alpha \in X(A):|\alpha(a)|\le v(a),a\in A\,\, \text{and}\,\,\alpha(q)\ge 0,q\in Q\}.$$

\end{enumerate}
\end{theorem}
\begin{proof}
Because the function $a\mapsto L(ab^2)$ is positive semidefinite and $v-$bounded we have by \cite{BCR}, Proposition 1.12 that $|L(ab^2)|\le v(a)L(b^2),a,b\in A$. This yields 
$$L(a^2b^2)=|L(a^2b^2)|\le v(a^2)L(b^2)\le (v(a))^2L(b^2),a,b\in A.$$
Consequently $(v(a))^2-a^2\in Q_L$. Now, the theorem follows from \cite{DA2}, Theorem 1.4.
\end{proof}

\begin{theorem}\label{MR}
Let $ L:A\to \mathbb R$ be a linear positive semidefinite functional and $Q\subseteq A$. Then, there exists an unique representing measure $\nu_L$ for $L$ with compact support, subset of the closed set $\{\alpha \in X(A)|\alpha (q)\ge 0,q\in Q\}$ if and only if 
$C_a=\sup_{n\in \mathbb N}\sqrt[2n] {L(a^ {2n})}<\infty,a\in A$ and $Q\subseteq Q_L.$

Moreover, in this case, the support of the measure $\nu_L$ is a subset of the set

$$\{\alpha \in X(A):|\alpha(a)|\le C_a\text { for all } a\in A,\alpha (q)\ge 0,q\in Q\}.$$

\end{theorem}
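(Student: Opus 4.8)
The plan is to derive Theorem~\ref{MR} from the already-established Theorem~\ref{PreMR} by choosing the absolute value $v$ to be exactly the function $a\mapsto C_a$, and then reconciling the two compact sets $K$ and $K_L$. So the first step is to verify that $v(a):=C_a=\sup_{n\in\mathbb N}\sqrt[2n]{L(a^{2n})}$ is a genuine absolute value on $A$ whenever it is finite for every $a$. The normalization $v(1)=1$ follows from $L(1)=1$, and submultiplicativity $C_{ab}\le C_aC_b$ should follow from the Cauchy--Schwarz-type inequality for the positive semidefinite functional together with the supremum definition; this is a routine computation that I would relegate to a lemma or cite from the companion papers \cite{DA1,DA2}, since the finiteness of $C_a$ for all $a$ is precisely the hypothesis on which everything hinges.

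Next, assuming $C_a<\infty$ for all $a$ and $Q\subseteq Q_L$, I would apply Theorem~\ref{PreMR} with this particular $v$. The hypothesis (i) of that theorem requires $L$ linear, $v$-bounded, $L(A^2)\subseteq[0,\infty)$, and $Q\subseteq Q_L$. Three of these are given outright; the only nontrivial one is $v$-boundedness, i.e.\ $|L(a)|\le C\cdot C_a$ for some constant $C$. Taking $C=1$ and $b=1$ in the defining inequality $C_{a}=\sup_n\sqrt[2n]{L(a^{2n})}$ should give $|L(a)|\le C_a$ directly, since $L(a^2)\le C_a^2$ already bounds $L(a)$ via positivity and Cauchy--Schwarz. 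Having checked (i), Theorem~\ref{PreMR} delivers a unique positive Radon measure $\mu$ supported on
$$K=\{\alpha\in X(A):|\alpha(a)|\le C_a\text{ for all }a\in A,\ \alpha(q)\ge 0,\ q\in Q\},$$
which is exactly the set named in the "Moreover" clause, and whose compactness follows because each coordinate $\alpha(a)$ is confined to $[-C_a,C_a]$. This measure is the desired $\nu_L$, and since its support lies inside $\{\alpha:\alpha(q)\ge 0,q\in Q\}$, it is a representing measure with compact support contained in that closed set.

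For the converse direction, I would suppose $\nu_L$ is a representing measure with compact support $K'\subseteq\{\alpha:\alpha(q)\ge0\}$ and recover both conclusions. That $Q\subseteq Q_L$ is immediate: for $q\in Q$ and any $b\in A$, $L(b^2q)=\int_{K'}\alpha(b)^2\alpha(q)\,d\nu_L\ge0$ because $\alpha(q)\ge0$ on the support. For finiteness of $C_a$, compactness of $K'$ means $\sup_{\alpha\in K'}|\alpha(a)|=:M_a<\infty$, and then $L(a^{2n})=\int\alpha(a)^{2n}\,d\nu_L\le M_a^{2n}L(1)=M_a^{2n}$, so $\sqrt[2n]{L(a^{2n})}\le M_a$ and hence $C_a\le M_a<\infty$.

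The main obstacle I anticipate is not any single step in isolation but the verification that $a\mapsto C_a$ is submultiplicative, which is what licenses calling it an absolute value and thus invoking Theorem~\ref{PreMR} at all; the inequality $C_{ab}\le C_aC_b$ is subtle because $L(a^{2n}b^{2n})$ does not factor, so one must use the positive-semidefinite Cauchy--Schwarz bound $L(xy)^2\le L(x^2)L(y^2)$ iteratively and then take roots carefully as $n\to\infty$. I would isolate this as the key lemma (or cite it from \cite{DA1} or \cite{DA2} if proved there), since once $C_a$ is known to be an absolute value the rest of the argument is a clean specialization of the preceding theorem.
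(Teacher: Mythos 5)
Your proposal is correct and follows essentially the same route as the paper's first proof: specialize Theorem~\ref{PreMR} to the absolute value $v(a)=C_a$, with submultiplicativity $C_{ab}\le C_aC_b$ supplied by the cited Lemma~3.3 of \cite{MSTP}. You spell out the $v$-boundedness check and the converse direction, which the paper leaves implicit, but the argument is the same.
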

\begin{proof}[First proof]
Note that , because according to \cite{MSTP}, Lemma 3.3,   the function $a\mapsto \sup_{n\in \mathbb N}\sqrt[2n] {L(a^ {2n})}=C_a$ is an absolute value, 
the result is a consequence of Theorem \ref{PreMR}.
\end{proof}
\begin{proof}[Second proof]
From  \cite{DA1}, Theorem 1.3, we obtain that there exists a unique representing measure $\nu_L$ for $L$ with compact support 
a subset of the compact set 
\begin{equation}\begin{split}K=\{\alpha :A\to \mathbb R|\alpha (1)=1,\alpha(ab)=\alpha(a)\alpha(b),\\|\alpha(a)|\le C_a,a\in A,\alpha (q)\ge 0,q\in Q\}\end{split}\end{equation}
If  $\alpha\in K$ the equalities  
$$L((a+b)cc)-L(acc)-L(bcc)=0,a,b,c\in A$$
and
$$L(xacc)-x L(acc)=0,x\in \mathbb R,a,c\in A$$

show that $\alpha$ is linear and consequently
$$K=\{\alpha \in X(A):|\alpha(a)|\le C_a\text { for all } a\in A,\alpha (q)\ge 0,q\in Q\}.$$

\end{proof}

\begin{example}Let $ L:A\to \mathbb R$ be a linear positive semidefinite functional such that $\sup_{n\in \mathbb N}\sqrt[2n] {L(a^ {2n})}<\infty,a\in A$. We suppose that for every $a,b \in A$ we have $L(ab^ 2)\ge 0$. Then, there exists a unique representing measure $\nu_L$, for $L$,with a compact support  subset of the set

$$\{\alpha \in X(A):\alpha(a)\le C_a ,\alpha (a)\ge 0\text { for all }a\in A\}.$$

\end{example}

Note that in the case of the previous example, we have $Q_L=A$.

It results from the integral representation in Theorem \ref{MR} that $Q_L$ is a semiring. We use it in the next result, which is an intrinsic characterization where we do not assume that $L$ is positive semidefinite.
\begin{theorem}\label{Chproducts}
Let  $ L:A\to \mathbb R$ be a linear functional with $L(1)=1$. Then, there exists a unique representing measure $\nu_L$ for L with
compact support if and only if 

$$C_a=\sup_{n\in \mathbb N}\sqrt[2n] {L(a^ {2n})}<\infty,a\in A.$$
and   
$$L(C_{j_1,a_1}\dots C_{j_n,a_n})\ge 0,a_1,\dots,a_n\in A,j_1,\dots,j_n\in \{1,2\},n\in \mathbb N,$$
where 
$$C_{1,a}=C_a-a\quad\quad\quad C_{2,a}=C_a+a.$$

Moreover if there is a family of positive numbers $(T_a)_{a\in A} $ such that
$$L(T_{j_1,a_1}\dots T_{j_n,a_n})\ge 0,a_1,\dots,a_n\in A,j_1,\dots,j_n\in \{1,2\},n\in \mathbb N;$$
where 
$$T_{1,a}=T_a-a\quad\quad\quad T_{2,a}=T_a+a$$
then $C_a=\sup_{n\in \mathbb N}\sqrt[2n] {L(a^ {2n})}<\infty,a\in A$ ,$L$ is positive semidefinite 
and we have $C_a\le T_a,a\in A.$
\end{theorem}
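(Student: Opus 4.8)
The plan is to treat the equivalence by observing that one direction is a soft consequence of the integral representation, while the other rests on a representation theorem for \emph{archimedean semirings}; the concluding ``moreover'' assertion then follows from the same representation applied to the family $(T_a)$.

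First I would dispose of necessity. Suppose $\nu_L$ is a representing measure for $L$ with compact support $K'$. Since $\nu_L$ is a probability measure ($L(1)=1$), for each $a\in A$ the map $n\mapsto\sqrt[2n]{L(a^{2n})}=\bigl(\int\alpha(a)^{2n}\,d\nu_L\bigr)^{1/2n}$ is nondecreasing (power-mean monotonicity) and converges to $\|\widehat a\|_{L^\infty(\nu_L)}=\max_{\alpha\in K'}|\alpha(a)|$, where $\widehat a(\alpha)=\alpha(a)$. Hence $C_a=\max_{\alpha\in K'}|\alpha(a)|<\infty$ and, in particular, $K'\subseteq K_L:=\{\alpha\in X(A):|\alpha(a)|\le C_a\ \text{for all }a\}$. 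On $K'$ one has $\alpha(C_{1,a})=C_a-\alpha(a)\ge 0$ and $\alpha(C_{2,a})=C_a+\alpha(a)\ge 0$, so $\alpha(C_{j_1,a_1}\cdots C_{j_n,a_n})=\prod_i\alpha(C_{j_i,a_i})\ge 0$; integrating against $\nu_L$ yields the claimed inequalities. This settles the forward implication.

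For the converse I would introduce the smallest semiring $S\subseteq A$ containing all $C_{1,a}$ and $C_{2,a}$. The hypothesis says $L\ge 0$ on every generating product, so by linearity $L(s)\ge 0$ for all $s\in S$. Since $C_a-a=C_{1,a}\in S$ and $C_a\in[0,\infty)\subseteq S$, the semiring $S$ is archimedean, and its character nonnegativity set equals $\{\alpha:\alpha(C_a\pm a)\ge 0\ \forall a\}=K_L$, which is therefore compact. The engine is the representation theorem for archimedean semirings (the measure form of the archimedean semiring Positivstellensatz, cf. \cite{SCH3}): a unital linear functional nonnegative on an archimedean semiring is represented by a probability measure on its compact character nonnegativity set. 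Applying this to $S$ produces a representing measure $\nu_L$ supported in $K_L$. Everything else is formal: $L(a^2)=\int\alpha(a)^2\,d\nu_L\ge 0$, so $L$ is positive semidefinite, and the first paragraph gives $\sup_n\sqrt[2n]{L(a^{2n})}=\max_{K_L}|\alpha(a)|=C_a$. Uniqueness is automatic, since any compactly supported representing measure is supported in $K_L$ (first paragraph) and the subalgebra $\{\widehat a:a\in A\}\subseteq C(K_L)$ separates points and contains the constants, hence is dense by Stone--Weierstrass; alternatively, once positive semidefiniteness is available one may finish existence and uniqueness on $K_L$ directly from Theorem \ref{MR} with trivial $Q$. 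For the ``moreover'' part I would run the identical argument with $(T_a)$ in place of $(C_a)$: the semiring $S_T$ generated by $\{T_a\pm a\}$ is archimedean, its nonnegativity set $\{\alpha:|\alpha(a)|\le T_a\}$ is compact, and $L\ge 0$ on $S_T$, so the representation theorem gives a measure supported there. Then $L$ is positive semidefinite as above, and $L(a^{2n})=\int\alpha(a)^{2n}\le T_a^{2n}$ for every $n$, whence $C_a=\sup_n\sqrt[2n]{L(a^{2n})}\le T_a<\infty$.

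The principal obstacle is precisely the converse: extracting a representing measure from nonnegativity of $L$ on the products $C_{j_1,a_1}\cdots C_{j_n,a_n}$ \emph{without} a standing positive-semidefiniteness hypothesis. One cannot feed this situation straight into Theorem \ref{MR}, whose hypothesis is $L(A^2)\subseteq[0,\infty)$; indeed $a^2\notin S$ in general, so positive semidefiniteness is invisible at the algebraic level and must be read off from the measure itself. This is exactly where the archimedean semiring representation theorem does the genuine work, and it also explains why the semiring positivity---rather than the finiteness of the $C_a$ alone---is what is needed to recover that $L$ is positive semidefinite.
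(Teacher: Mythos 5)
Your proposal is correct and follows essentially the same route as the paper: the paper's proof is a two-line citation to Maserick's theorem or, alternatively, to the representation theorem for linear functionals nonnegative on an archimedean semiring (Schm\"udgen, Corollary 12.47), and your argument is precisely a worked-out version of that second route, with the semiring generated by the elements $C_a\pm a$ (resp.\ $T_a\pm a$) playing the role the paper assigns to the identities $C_{1,a}+C_{2,a}=2C_a$ and $C_{2,a}-C_{1,a}=2a$. The supporting details (necessity via power means, compactness of $K_L$, uniqueness via Stone--Weierstrass, and the ``moreover'' part by rerunning the argument for $(T_a)$) are all sound.
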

\begin{proof}
Because 
$C_{1,a}+C_{2,a}=2C_a$ and $C_{2,a}-C_{1,a}=2a$
we obtain our result from \cite{M1}, Theorem 2.1.

The result is also a consequence of \cite{SCH4}, Corollary 12.47.
\end{proof}
Actually we can improve the result from Theorem \ref{Chproducts} in the following way:
\begin{theorem}
Let   $A$ be generated by a set $G$ and let $ L:A\to \mathbb R$ be a linear functional with $L(1)=1$. 

If there is a family of positive numbers $(T_a)_{a\in A} $ such that
$$L(T_{j_1,a_1}\dots T_{j_n,a_n})\ge 0,a_1,\dots,a_n\in G,j_1,\dots,j_n\in \{1,2\},n\in \mathbb N,$$
where  $T_{1,a}$ and $T_{2,a}$ are as in Theorem \ref{Chproducts},
then, $L$ is positive semidefinite and $C_a=\sup_{n\in \mathbb N}\sqrt[2n] {L(a^ {2n})}<\infty,a\in A$.

\end{theorem} 
\begin{proof}The result is an immediate consequence of  \cite{M1}, Theorem 2.1 and  \cite{MSTP},Theorem 1.2 .

\end{proof}
Now, we give another intrinsic characterization where we do not suppose that $L$ is positive semidefinite.

\begin{prop}\label{ConeARC}
Let  $ L:A\to \mathbb R$ be a linear functional with $L(1)=1$. Then, there exists a unique representing measure $\nu_L$ for L with
compact support if and only if 

$$C_a=\sup_{n\in \mathbb N}\sqrt[2n] {L(a^ {2n})}<\infty,a\in A$$
and
$$L((C_a-a)^j (C_a+a)^k)\ge 0,a\in A,j,k\in \mathbb N$$

\end{prop}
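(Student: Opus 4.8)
The plan is to reduce the stated equivalence to Theorem \ref{MR}, whose only missing ingredient in this situation is positive semidefiniteness, since here $L$ is \emph{not} assumed to satisfy $L(A^2)\subseteq[0,\infty)$. Throughout I would write $p_a:=C_a-a$ and $q_a:=C_a+a$, so that the two positivity hypotheses read $L(p_a^{\,j}q_a^{\,k})\ge 0$ and $L((C_b^2-b^2)p_a^{\,j}q_a^{\,k})\ge 0$, and I would record at the outset the factorization $C_b^2-b^2=p_bq_b$.

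For necessity I would argue directly from the integral representation. If a representing measure $\nu_L$ with compact support exists, then $L(a^2)=\int\alpha(a)^2\,d\nu_L(\alpha)\ge 0$, so $L$ is automatically positive semidefinite; Theorem \ref{MR}, applied with the trivial set $Q$, then forces $C_a<\infty$ together with $\operatorname{supp}\nu_L\subseteq\{\alpha\in X(A):|\alpha(a)|\le C_a,\ a\in A\}$. On this support one has $C_a\pm\alpha(a)\ge 0$ and $C_b^2-\alpha(b)^2\ge 0$, so both displayed integrands are nonnegative and the two positivity conditions follow immediately by integration.

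The substance lies in the converse, and the key step is to deduce positive semidefiniteness from condition 1 and the single-variable condition 2 alone. I would fix $a\in A$ and introduce the linear functional $\ell_a$ on $\mathbb R[x]$ given by $\ell_a(p)=L(p(a))$, so that $\ell_a(1)=1$. Condition 2 says precisely that $\ell_a((C_a-x)^j(C_a+x)^k)\ge 0$ for all $j,k\in\mathbb N$, which, after the affine change of variable carrying $[-C_a,C_a]$ onto $[0,1]$, is exactly the Hausdorff moment condition. Hence $\ell_a$ is represented by a positive measure $\mu_a$ on $[-C_a,C_a]$, and in particular $L(a^2)=\ell_a(x^2)=\int t^2\,d\mu_a(t)\ge 0$. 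As $a$ is arbitrary, $L(A^2)\subseteq[0,\infty)$. With positive semidefiniteness established, $L(1)=1$, and $C_a<\infty$ from condition 1, Theorem \ref{MR} (again with trivial $Q$) supplies a unique representing measure $\nu_L$ with compact support, which completes the converse.

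The main obstacle is exactly this passage to positive semidefiniteness in the absence of the hypothesis $L(A^2)\subseteq[0,\infty)$, for which the one-variable Hausdorff representation is the natural instrument. I would expect condition 3 to be inessential for mere existence of $\nu_L$ (it is forced by the conclusion, as the necessity argument shows) and to serve only to record that the support lies in the symmetric strips $|\alpha(b)|\le C_b$. Should one wish to avoid invoking the classical Hausdorff theorem, the same conclusion follows by a Bernstein-polynomial approximation: the Bernstein approximants of $t^2$ on $[-C_a,C_a]$ are nonnegative combinations of the $p_a^{\,j}q_a^{\,k}$, and the uniform bound $|\ell_a(p)|\le\|p\|_{[-C_a,C_a]}$ furnished by condition 2 lets one pass to the limit to obtain $L(a^2)\ge 0$.
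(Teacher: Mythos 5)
Your proof is correct, and it reaches the conclusion by a route that differs from the paper's in two respects worth recording. First, to extract positive semidefiniteness from condition 2 you reduce to the one-variable functional $\ell_a(p)=L(p(a))$ and invoke the classical Hausdorff moment theorem on $[-C_a,C_a]$; the paper instead proves an explicit combinatorial identity expressing $(2M)^{2n}\,4a^2$ plus $O\!\left((2M)^{2n+1}/(n-1)\right)$ terms as a nonnegative combination of the elements $(M-a)^{j}(M+a)^{k}$, which is exactly the Bernstein approximation of $\bigl((M+a)-(M-a)\bigr)^2$ --- i.e.\ the very argument you sketch as your fallback, so the two are mathematically the same device packaged differently (yours outsources the limit to Hausdorff's theorem, the paper's is self-contained and quantitative). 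Second, and more substantively, the paper applies this positivity transfer \emph{twice} --- once to $L$ itself and once to $x\mapsto L((C_b^2-b^2)x)$ using condition 3 --- in order to verify the hypothesis $C_b^2-b^2\in Q_L$ of \cite{DA2}, Theorem 1.4; you instead pass directly to Theorem \ref{MR} with trivial $Q$ once $L(A^2)\subseteq[0,\infty)$ is known, which makes condition 3 logically redundant for the equivalence. That observation is correct and slightly sharpens the statement, though condition 3 is not dead weight in the paper's architecture: the subsequent Positivstellensatz needs the cone generated by \emph{both} families, and the necessity direction still requires condition 3 to be verified by integration, exactly as you do. Two minor points you should make explicit: the affine change of variables onto $[0,1]$ presupposes $C_a>0$ (when $C_a=0$ condition 2 with $j+k$ fixed and $j$ of either parity forces $L(a^m)=0$ for $m\ge 1$, so $L(a^2)\ge 0$ is immediate), and when bounding $|\ell_a(p)|$ by $\|p\|_{[-C_a,C_a]}$ via Bernstein approximants one should note that $B_n$ preserves the finite-dimensional space of polynomials of degree at most $\deg p$, so that $\ell_a(B_n(p))\to\ell_a(p)$ without any continuity assumption on $\ell_a$.
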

\begin{proof}This result is a consequence of \cite{MSTP}, Theorem 1.2 if we prove that for a  number $M>0$, the equalities 
$$L((M-a)^j (M+a)^k)\ge 0,a\in A,j,k\in \mathbb N,$$ 
imply that $L(a^2)\ge 0,a\in A$. This results from \cite{SCH3}, Lemma 3.1. We give an elementary proof of this result for the reader's convenience.

We have 
\begin{equation*}\begin{split}\sum_{j,k=0}^n\left(\frac{j^2(2M)^2+k^2(2M)^2}{n(n-1)}-\frac{2jk(2M)^2}{n^2}\right)\\\cdot {n\choose j} {n\choose k}(M+a)^j(M-a)^{n-j}(M-a)^k(M+a)^{n-k}\end{split}\end{equation*}

\begin{equation*}\begin{split}=\sum_{j,k=0}^n\Biggl(\frac{j(j-1)(2M)^2}{n(n-1)}+\frac{k(k-1)(2M)^2}{n(n-1)}-\frac{2jk(2M)^2}{n^2}\\
+\frac{j(2M)^2}{n(n-1)}+\frac{k(2M)^2}{n(n-1)}\Biggr) \\
\cdot {n\choose j} {n\choose k}(M+a)^j(M-a)^{n-j}(M-a)^k(M+a)^{n-k}\end{split}\end{equation*}

\begin{equation*}\begin{split}=(2M)^{2n}(M+a-(M-a))^2+\frac{(2M)^{2n+1}}{n-1}(M+a)\\+\frac{(2M)^{2n+1}}{n-1}(M-a)\end{split}\end{equation*}
$$=(2M)^{2n}4a^2+\frac{(2M)^{2n+1}}{n-1}(M+a)+\frac{(2M)^{2n+1}}{n-1}(M-a).$$
From the previous calculation, it is clear that equalities 
$$L((M-a)^j (M+a)^k)\ge 0,a\in A,j,k\in \mathbb N,$$ 
imply that $L(a^2)\ge 0,a\in A.$
\end{proof}
The method of showing that a functional is positive semidefinite, from the previous proof, was first used in \cite{MS}.

Note that if $ L:A\to \mathbb R$ is a linear functional positive semidefinite such that $C_a=\sup_{n\in \mathbb N}\sqrt[2n] {L(a^ {2n})}<\infty,a\in A$, we can show that
$$L((C_a-a)^j (C_a+a)^k)\ge 0,a\in A,j,k\in \mathbb N$$
without using the integral representation.  We can use instead \cite{Marshall},Proposition 5.2.3 or \cite{SCH2}, Lemma 12.6.

In the next result, we use the proof of Proposition \ref{ConeARC} to establish a positivstellensatz for an Archimedean cone, which is not a semiring nor a quadratic module.
\begin{prop}If  $(T_a)_{a\in A} $ is a family of positive numbers and $C$ the unital cone generated by the set $S$ where $S$ is
\begin{equation*}\begin{split}\left\{(T_a-a)^j (T_a+a)^k,a\in A,j,k\in \mathbb N\right\}\\
\cup\left\{(T^2_b-b^2)(T_a-a)^j (T_a+a)^k)\ge 0,a,b\in A,j,k\in \mathbb N\right\}\end{split}\end{equation*}
If $\alpha(c)>0$ for every $\alpha\in K$ where 

$$K=\{\alpha \in X(A):|\alpha(a)|\le T_a \text { for all }a\in A\}$$
 then $c\in C$.
\end{prop}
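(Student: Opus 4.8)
The plan is to argue by contradiction, extracting from a putative $b\notin C$ a state on $A$ whose integral representation (supplied by Theorem \ref{PreMR}) contradicts the strict positivity of $b$ on $K$.

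First I would record the structural facts about $C$. Taking $j=k=0$ gives $1\in S\subseteq C$, and taking $(j,k)=(1,0)$ and $(0,1)$ gives $C_a-a,\ C_a+a\in C$ for every $a\in A$; hence $C$ is an Archimedean cone. Moreover, because each $\alpha\in X(A)$ is multiplicative, the inequalities $\alpha(C_a\pm a)\ge 0$ are equivalent to $|\alpha(a)|\le C_a$ and they force $\alpha\ge 0$ on every generator in $S$; thus $K=\{\alpha\in X(A):\alpha(c)\ge 0\ \text{for all}\ c\in C\}$. Since $K$ is a closed subset of the compact product $\prod_{a}[-C_a,C_a]$ it is compact, so the continuous map $\alpha\mapsto\alpha(b)$ attains a positive minimum $\epsilon>0$ on $K$.

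Next, assume $b\notin C$. The Archimedean property makes $1$ an internal point of $C$, since for every $d\in A$ one has $1\pm t d=(1-tC_d)\,1+t\,(C_d\pm d)\in C$ once $tC_d\le 1$. The Eidelheit separation theorem then yields a nonzero linear $L$ with $L(c)\ge L(b)$ for all $c\in C$; because $C$ is a cone this forces $L(C)\subseteq[0,\infty)$ and $L(b)\le 0$, and $L(1)=0$ would give, via $L(C_d\pm d)\ge 0$, that $L\equiv 0$, so after rescaling I may assume $L(1)=1$. The crux is to recognize that this separating functional is already a moment functional supported in $K$. Positivity of $L$ on the first family $(C_a-a)^j(C_a+a)^k$ feeds the algebraic identity used in the proof of Proposition \ref{ConeARC} (with $M=C_a$) to give $L(a^2)\ge 0$ for every $a$, so $L(A^2)\subseteq[0,\infty)$; and the cases $(1,0),(0,1)$ give $|L(a)|\le C_a$, i.e.\ $L$ is $v$-bounded for the absolute value $v=C_{\cdot}$ (an absolute value by \cite{MSTP}, Lemma 3.3). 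Applying Theorem \ref{PreMR} with this $v$ and $Q=\{1\}$ produces a positive Radon measure $\mu$ with $L(a)=\int_{K}\alpha(a)\,d\mu(\alpha)$, the support lying inside $K$ precisely because $|\alpha(a)|\le v(a)=C_a$. Then $L(b)=\int_K\alpha(b)\,d\mu\ge\epsilon\,\mu(K)=\epsilon\,L(1)=\epsilon>0$, contradicting $L(b)\le 0$, and therefore $b\in C$.

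I expect the main obstacle to be the separation and normalization step: verifying that $1$ is an internal point of $C$ and that the separating functional may be taken positive on $C$ and scaled to a state, together with the one genuinely algebra-specific input, namely deducing $L(A^2)\subseteq[0,\infty)$ from positivity on the first family of generators, since $a^2$ itself need not lie in $C$. Once $L$ is known to be a positive semidefinite, $C_{\cdot}$-bounded state, Theorem \ref{PreMR} does the remaining work; notably the second family of generators is not needed in this route, its presence only enlarging $C$ and hence making the conclusion $b\in C$ weaker.
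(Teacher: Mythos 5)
Your proof is correct and follows essentially the same route as the paper's: establish that $C$ is Archimedean, separate $b$ from $C$ to obtain a state $L$ nonnegative on $C$ with $L(b)\le 0$, represent $L$ by a measure on $K$, and derive a contradiction from $\alpha(b)>0$ on the compact set $K$. The differences are only in presentation — you unfold the separation step that the paper delegates to \cite{SCH4}, Proposition 12.14, and you reach the integral representation through Theorem \ref{PreMR} (noting in passing that the second family of generators is not needed for this direction) rather than by invoking Proposition \ref{ConeARC}, whose own proof is the same chain of ideas.
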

\begin{proof}
Because $T_a+a$ and $T_a-a$  are elements of  $C$ the unital cone $C$ is archimedean.
If we suppose that $c\notin C$, then, according to \cite{SCH4},Proposition 12.14, there is a linear functional $L:A\to \mathbb R$ positive on $C$ such that $L(1)=1$ and $L(c)\le 0$.

Using the proof of  Proposition \ref{ConeARC} and \cite{DA2}, Theorem 1.4 we obtain a measure $\mu$ on $K$ such that
$$L(a)=\int_{K}\alpha (a)d\mu(\alpha),a\in A$$
Now from $L(1)=1$ and $\alpha (c)>0,\alpha\in K$ we get $L(c)>0$. This is not possible and hence $c\in C$.
\end{proof}

Note that in the previous proof, the character $\alpha \in K$ if and only if $\alpha $ is positive on the set S.
\begin{remark}In the same way can be obtained a new proof for \cite{SCH3}, Theorem 3.3.
\end{remark}
In \cite{MSTP}, Lemma 3.3 is shown that $a\mapsto C_a$ is a seminorm.  Consequently, the equality from the next result is expected.
\begin{prop}Let $ L:A\to \mathbb R$ be a linear positive semidefinite functional with $L(1)=1$. We have
$$(C_a)^ 2=C_{a^ 2}=\sup_{b\in A,L(b^ 2)\ne 0} \frac{L(a^ 2b^ 2)}{L(b^ 2)}$$
\end{prop}
\begin{proof}
We denote 
$$\sup_{b\in A,L(b^ 2)\ne 0} \frac{L(a^ 2b^ 2)}{L(b^ 2)}=R_a.$$
We have
$$R_aL(b^ 2)-L(a^ 2b^ 2)\ge 0$$
for all $b\in A$.

Consequently from \cite{DA2},Theorem 1.4 there is a positive Radon measure $\nu $ on the compact 
$$K=\{\alpha \in X(A)||\alpha(a)|\le \sqrt{R_a}, a\in A\}$$ 
such that
$$L(a)=\int_K\alpha(a)d\nu (\alpha).$$

This yields 
$$L(a^ {2n})=\int_K|\alpha(a)|^ {2n}d\nu (\alpha)\le R_a^ {n},n\in \mathbb N.$$
Hence 
$$C_a\le \sqrt{R_a}.$$
Now, because $a\mapsto C_a$ is a seminorm, we get, as in the proof of Theorem \ref{PreMR},
$$C_{a^2}L(b^ 2)-L(a^ 2b^ 2)\ge 0$$
for all $b\in A$, which yields  
$$C_{a^ 2}\ge R_a.$$
This finishes the proof.
\end{proof}
As a direct consequence of the previous proposition we have the next  intrinsic characterization:
\begin{theorem}
Let  $ L:A\to \mathbb R$ be a linear positive semidefinite functional with $L(1)=1$. Then, there exists a unique representing measure $\nu_L$ for L with
compact support if and only if 
$$\sup_{b\in A,L(b^ 2)\ne 0} \frac{L(a^ 2b^ 2)}{L(b^ 2)}<\infty,a\in A.$$
\end{theorem}
\section{An alternative intrinsic characterization of moment functionals in the compact case }
To establish the characterization of moment functionals in this section, we will need the following theorem
\begin{theorem}\label{chaintervals}Let $A$ be an unital commutative $\mathbb R$-algebra and $Q\subseteq A$. Let $(M_a)_{a\in A}$ and $(m_a)_{a\in A}$ families of  real numbers such that $m_a\le M_a$ and 
$$K=\{\alpha \in X(A)|m_a\le \alpha(a)\le M_a, a\in A\,\, \text{and}\,\,\alpha (q)\ge 0,q\in Q\}.$$
For a linear functional $ L:A\to \mathbb R$ the following conditions are equivalent:
\begin{enumerate}
\item[(i)]the functional $L$ is positive semidefinite, 
$a-m_a,M_a-a\in Q_L$ for all $a\in A$ and  $q\in Q_L$ for all  $q\in Q$ ;
\item[(ii)]there is a unique positive Radon measure $\mu$ on $K$ such that

$$L(a)=\int_{K}\alpha (a)d\mu(\alpha),a\in A.$$
\end{enumerate}
\end{theorem}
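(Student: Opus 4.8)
The plan is to reduce Theorem~\ref{chaintervals} to the representation theorems already available by folding the two interval constraints into a single positivity constraint. Set
$$Q'=Q\cup\{a-m_a:a\in A\}\cup\{M_a-a:a\in A\},$$
where $a-m_a$ and $M_a-a$ are read as elements of $A$ (identifying $m_a,M_a$ with $m_a\cdot 1,M_a\cdot 1$). Since every $\alpha\in X(A)$ satisfies $\alpha(1)=1$, the inequalities $m_a\le\alpha(a)\le M_a$ are equivalent to $\alpha(a-m_a)\ge 0$ and $\alpha(M_a-a)\ge 0$, so
$$K=\{\alpha\in X(A):\alpha(p)\ge 0\ \text{for all}\ p\in Q'\},$$
and condition (i) is literally the inclusion $Q'\subseteq Q_L$. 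Thus everything reduces to showing that $Q'\subseteq Q_L$ is equivalent to the existence of a (necessarily unique) representing measure supported in $K$.

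The implication (ii)$\Rightarrow$(i) is immediate: if $L(a)=\int_K\alpha(a)\,d\mu(\alpha)$, then for every $b\in A$ and $p\in Q'$ we have $L(b^2p)=\int_K\alpha(b)^2\alpha(p)\,d\mu(\alpha)\ge 0$ because $\alpha(p)\ge 0$ on $K$; hence $Q'\subseteq Q_L$, and $p=1\in Q'$ gives $L(A^2)\subseteq[0,\infty)$. For the converse I would first record positive semidefiniteness: adding the two relations in (i) gives $M_a-m_a\in Q_L$ for every $a$, and choosing any $a$ with $M_a>m_a$ yields $(M_a-m_a)L(b^2)\ge 0$, hence $L(b^2)\ge 0$ (should $M_a=m_a$ hold for every $a$, the set $K$ is at most one point and the equivalence is checked directly). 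I also assume $L(1)=1$, as is implicit in the definition of $Q_L$.

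The heart of the argument, and the step I expect to be the main obstacle, is a boundedness estimate. Put $N_a=\max(|m_a|,|M_a|)$. Using positive semidefiniteness we may add nonnegative constants to the generators of $Q'$ to obtain $N_a-a=(M_a-a)+(N_a-M_a)\in Q_L$ and $N_a+a=(a-m_a)+(N_a+m_a)\in Q_L$. I claim these force
$$N_a^2-a^2\in Q_L,\qquad\text{i.e.}\qquad L(a^2b^2)\le N_a^2\,L(b^2)\quad\text{for all}\quad b\in A.$$
This is not an algebraic consequence of $N_a\pm a\in Q_L$; to get it I pass to the semi-inner-product space $(A,\langle x,y\rangle=L(xy))$, on which multiplication $M_a:x\mapsto ax$ is a symmetric operator. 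The relations $N_a\pm a\in Q_L$ say precisely that $|\langle M_ax,x\rangle|\le N_a\|x\|^2$, and by polarization this yields $\|M_ax\|\le N_a\|x\|$, whence $\langle M_a^2x,x\rangle=\|M_ax\|^2\le N_a^2\|x\|^2$, which is the displayed inequality. (This is the standard archimedean-bound argument; cf. \cite{SCH4}.) Iterating $L(a^{2n})=L\big(a^2(a^{n-1})^2\big)\le N_a^2\,L(a^{2n-2})$ gives $L(a^{2n})\le N_a^{2n}L(1)=N_a^{2n}$, so $C_a=\sup_n\sqrt[2n]{L(a^{2n})}\le N_a<\infty$.

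With positive semidefiniteness, the bounds $N_a^2-a^2\in Q_L$ for all $a$, and $Q'\subseteq Q_L$ in hand, I would conclude exactly as in the proof of Theorem~\ref{PreMR}, invoking \cite{DA2}, Theorem 1.4 with the bounds $N_a$ and the set $Q'$ (equivalently Theorem~\ref{MR}, whose hypothesis $C_a<\infty$ has just been verified). This produces a unique positive Radon measure representing $L$ and supported in
$$\{\alpha\in X(A):|\alpha(a)|\le N_a,\ \alpha(p)\ge 0\ \text{for all}\ p\in Q'\}=K,$$
the equality holding because $\alpha(a-m_a)\ge 0$ and $\alpha(M_a-a)\ge 0$ already imply $m_a\le\alpha(a)\le M_a$ and hence $|\alpha(a)|\le N_a$. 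This gives (i)$\Rightarrow$(ii) and completes the equivalence.
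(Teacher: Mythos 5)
Your proof is correct, and the overall architecture (reduce to a single positivity set $Q'$, extract the bound $N_a\pm a\in Q_L$, upgrade it to $N_a^2-a^2\in Q_L$, then invoke \cite{DA2}, Theorem 1.4) matches the paper's. The one genuinely different step is the upgrade from $N_a\pm a\in Q_L$ to $N_a^2-a^2\in Q_L$: you pass to the semi-inner-product space $\langle x,y\rangle=L(xy)$ and use the standard fact that for a symmetric operator the numerical radius controls the norm, whereas the paper stays entirely algebraic, observing that $M\pm a\in Q_L$ gives $(M-a)\big((M+a)x\big)^2$ and $(M+a)\big((M-a)x\big)^2$ nonnegative under $L$ and then using the identity
$$(M-a)(M+a)^2+(M+a)(M-a)^2=2M(M^2-a^2)$$
with $M=|m_a|+|M_a|$ to conclude $M^2-a^2\in Q_L$ directly. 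In particular your parenthetical assertion that the implication ``is not an algebraic consequence of $N_a\pm a\in Q_L$'' is mistaken --- the displayed identity is exactly such an algebraic derivation (modulo the trivial degenerate case $M=0$). Your operator-theoretic route is perfectly valid and generalizes readily (it is the usual archimedean-bound argument), but the paper's identity is shorter, avoids polarization and the discussion of null vectors in the GNS space, and keeps the whole proof inside $Q_L$. A small further remark: positive semidefiniteness of $L$ comes for free from $1\in Q\subseteq Q_L$, so your case distinction on whether some $M_a>m_a$ is unnecessary.
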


\begin{proof}
We have $|m_a|+|M_a|+a=a-m_a+m_a+|m_a|+|M_a|\in Q_L$ and $|m_a|+|M_a|-a=M_a-a-M_a+|m_a|+|M_a|\in Q_L$.

Let $M=|m_a|+|M_a|$. Because the functions  $x\mapsto L((M-a)x)$ and $x\mapsto L((M+a)x)$ are positive semidefinite the function $x\mapsto L((M^ 2-a^ 2)x)$ is positive semidefinite as a consequence of the equality
$$(M-a)(M+a)^ 2+(M+a)(M-a)^ 2=2M(M^ 2-a^ 2)$$

Hence, \cite{DA2}, theorem 1.4 yields the result from the theorem.
\end{proof}
\begin{remark}This theorem can be obtained also from \cite {DA3},Theorem 2.1.
A particular case when $0\le d_a\le D_a$ is in \cite{DA4},Theorem 3.
\end{remark}
\begin{theorem}\label{MRA}
Let $A$ be an unital commutative $\mathbb R$-algebra.
Let $L: A\to\mathbb R $ be a  positive semidefinite functional on $A$ with $L(1)=1$.
Then, there exists a unique representing Radon measure $\mu_L$ for $L$ with compact support  if and only if
$$\sup_{x\in A,L(x^ 2)\ne 0} \frac{L(ax^ 2)}{L(x^ 2)}<\infty,\,\,\text{for all }a\in A$$
and

$$\inf_{x\in A,L(x^ 2)\ne 0} \frac{L(ax^ 2)}{L(x^ 2)}>-\infty,\,\,\text{for all }a\in A.$$
Moreover, in this case $supp(\nu_L)$ is a subset of the set $K$  of all $\alpha\in X(A)$ such that 
$$\inf_{x\in A,L(x^ 2)\ne 0} \frac{L(ax^ 2)}{L(x^ 2)}\le \alpha(a)\le \sup_{x\in A,L(x^ 2)\ne 0} \frac{L(ax^ 2)}{L(x^ 2)}$$
 and we have,
$$S_a=\sup_{\alpha\in K}\alpha(a)=\sup_{x\in A,L(x^ 2)\ne 0} \frac{L(ax^ 2)}{L(x^ 2)}$$
and
$$s_a=\inf_{\alpha\in K}\alpha(a)=\inf_{x\in A,L(x^ 2)\ne 0} \frac{L(ax^ 2)}{L(x^ 2)}$$
for all $a\in A$.
\end{theorem}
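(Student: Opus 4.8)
The plan is to derive this as a special case of Theorem \ref{chaintervals} by choosing the right bounds $m_a, M_a$ and the right auxiliary set $Q$. The quantities appearing in the hypotheses are exactly the suprema and infima of $\alpha(a)$ realized over candidate representing measures, so the natural move is to set
\[
M_a=\sup_{x\in A,\;L(x^2)\ne 0}\frac{L(ax^2)}{L(x^2)},\qquad m_a=\inf_{x\in A,\;L(x^2)\ne 0}\frac{L(ax^2)}{L(x^2)},
\]
and to take $Q=\{1\}$ (or the trivial cone, so the constraint $\alpha(q)\ge 0$ is vacuous beyond $\alpha(1)=1$). With these choices the set $K$ in Theorem \ref{chaintervals} becomes exactly the set described in the "Moreover" clause, so once I verify condition (i) of that theorem the integral representation and the support statement follow immediately.

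First I would establish the forward direction (existence of $\nu_L$ implies finiteness of the sup and inf). If a compactly supported representing measure $\nu_L$ exists, then for any $a$ the function $\alpha\mapsto\alpha(a)$ is continuous on the compact support, hence bounded, say $|\alpha(a)|\le C$ there; integrating $L(ax^2)=\int\alpha(a)\alpha(x)^2\,d\nu_L(\alpha)$ against $L(x^2)=\int\alpha(x)^2\,d\nu_L$ gives $|L(ax^2)|\le C\,L(x^2)$, so the Rayleigh-type quotient is bounded above and below. This direction is routine.

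For the substantive direction I assume both quotients are finite, define $M_a,m_a$ as above, and must verify (i) of Theorem \ref{chaintervals}, namely $a-m_a\in Q_L$ and $M_a-a\in Q_L$ for every $a$. By the definition of $M_a$ as a supremum we have, for every $x$ with $L(x^2)\ne 0$, the inequality $M_a L(x^2)-L(ax^2)\ge 0$, i.e. $L((M_a-a)x^2)\ge 0$; since this also holds trivially when $L(x^2)=0$ (indeed $M_a\ge L(ax^2)/L(x^2)$ forces $L(ax^2)\le M_aL(x^2)$ in all cases, the $L(x^2)=0$ case needing a short separate argument using positive semidefiniteness and the Cauchy–Schwarz inequality $L(ax^2)^2\le L(a^2x^2)L(x^2)$), we conclude $M_a-a\in Q_L$, and symmetrically $a-m_a\in Q_L$. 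The only non-obvious point is to confirm these real suprema/infima are attained in the defining inequality cleanly enough that the membership in $Q_L$ holds for all $x$, not merely those with $L(x^2)\ne 0$.

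The main obstacle I anticipate is precisely this boundary case $L(x^2)=0$: the Rayleigh quotient is only defined when $L(x^2)\ne0$, yet membership in $Q_L$ demands $L((M_a-a)x^2)\ge 0$ for \emph{all} $x\in A$. I expect to handle it by the Cauchy–Schwarz inequality for the positive semidefinite bilinear form $(b,c)\mapsto L(bc)$, which gives $L(ax^2)^2\le L(a^2x^2)\,L(x^2)$; combined with boundedness of the quotient applied to $a^2$ (whose associated sup is finite since $a^2$ is again an algebra element), this forces $L(ax^2)=0$ whenever $L(x^2)=0$, closing the gap. Once (i) is verified, I invoke Theorem \ref{chaintervals} to obtain the unique measure $\mu=\nu_L$ supported on $K$, and reading off $K$ gives the stated support containment, completing the proof.
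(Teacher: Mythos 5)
Your proposal follows essentially the same route as the paper: both directions reduce to Theorem \ref{chaintervals} with $M_a=D_a$, $m_a=d_a$, verifying $D_a-a,\;a-d_a\in Q_L$ for the substantive direction and bounding the Rayleigh quotient by $\sup_{\alpha\in K'}\alpha(a)$ for the converse. Your explicit handling of the case $L(x^2)=0$ via the Cauchy--Schwarz inequality $L(ax^2)^2\le L(a^2x^2)L(x^2)$ is a point the paper passes over silently, and it is correctly resolved.
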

\begin{proof}
We denote 
$$D_a=\sup_{x\in A,L(x^ 2)\ne 0} \frac{L(ax^ 2)}{L(x^ 2)}$$
and
$$d_a=\inf_{x\in A,L(x^ 2)\ne 0} \frac{L(ax^ 2)}{L(x^ 2)}$$
and suppose that  $d_a,D_a\in\mathbb R$.
 We have 

$D_aL(x^ 2)-L(ax^ 2)\ge 0,x\in A$
and
$L(ax^ 2)-d_aL(x^ 2)\ge 0,x\in A$.

As a consequence of Theorem  \ref{chaintervals} there is a positive Radon measure $\mu_L$ on $K=\{\alpha \in X(A)|d_a\le \alpha(a)\le D_a, a\in A\}$ such that

$$L(a)=\int_{K}\alpha (a)d\mu_L(\alpha),a\in A. $$
From the integral representation we get $S_a\le D_a$ and $d_a\le s_a$.

Also, using the integral representation  it is easy to see that 
$$S_aL(x^ 2)-L(ax^ 2)\ge 0,x\in A$$
and
$$L(ax^ 2)-s_aL(x^ 2)\ge 0,x\in A.$$
Consequently $S_a\ge D_a$ and $s_a\le d_a$.

Now if there is a positive Radon measure $\nu$ on a compact $K^\prime\subseteq X(A)$ such that
$$L(a)=\int_{K^\prime}\alpha (a)d\nu(\alpha),a\in A $$
we have, for all $a,x\in A$: 
$$L(ax^ 2)=\int_{K^\prime}\alpha (a)\alpha(x^2)d\nu(\alpha)\le \sup _{\alpha\in K^\prime} \alpha(a)\int_{K^\prime}\alpha(x^2)d\nu(\alpha)\le \sup _{\alpha\in K^\prime}\alpha(a)L(x^2)$$
which yields $D_a\in \mathbb R$ and in the same way we show that $d_a\in \mathbb R$.

This finishes the proof.
\end{proof}

In the following result we do not suppose that the functional is positive semidefinite.
\begin{theorem}\label{prodD}
Let  $ L:A\to \mathbb R$ be a linear functional with $L(1)=1$. Then, there exists a unique representing measure $\nu_L$ for L with
compact support if and only if 

$$D_a=\sup_{x\in A,L(x^ 2)\ne 0} \frac{L(ax^ 2)}{L(x^ 2)}<\infty\,\,,d_a=\inf_{x\in A,L(x^ 2)\ne 0} \frac{L(ax^ 2)}{L(x^ 2)}>-\infty$$
and   
$$L(D_{j_1,a_1}\dots D_{j_n,a_n})\ge 0,a_1,\dots,a_n\in A,j_1,\dots,j_n\in \{1,2\},n\in \mathbb N,$$
where 
$$D_{1,a}=D_a-a\quad\quad\quad D_{2,a}=a-d_a.$$
\end{theorem}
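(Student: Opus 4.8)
The plan is to treat this as the arbitrary-interval analogue of the theorem preceding Proposition \ref{ConeARC}: the symmetric generators $C_{1,a}=C_a-a,\ C_{2,a}=C_a+a$ are replaced by the one-sided generators $D_{1,a}=D_a-a$ and $D_{2,a}=a-d_a$, and the symmetric target $[-C_a,C_a]$ is replaced by the interval $[d_a,D_a]$. So I would aim to recast the hypotheses as ``$L\ge 0$ on an archimedean semiring'' and then invoke the same external result used before.

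For the forward direction, assume $L$ admits a positive representing measure $\nu_L$ with compact support. Then $L(a^2)=\int \alpha(a)^2\,d\nu_L(\alpha)\ge 0$, so $L$ is automatically positive semidefinite, and Theorem \ref{MRA} applies: it gives $D_a<\infty$, $d_a>-\infty$, and $\operatorname{supp}(\nu_L)\subseteq\{\alpha\in X(A):d_a\le\alpha(a)\le D_a \text{ for all }a\in A\}$. On this support one has $\alpha(D_{1,a})=D_a-\alpha(a)\ge 0$ and $\alpha(D_{2,a})=\alpha(a)-d_a\ge 0$, so for any choice of indices $\alpha(D_{j_1,a_1}\cdots D_{j_n,a_n})=\prod_i\alpha(D_{j_i,a_i})\ge 0$; integrating against $\nu_L$ yields the required inequalities $L(D_{j_1,a_1}\cdots D_{j_n,a_n})\ge 0$.

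For the converse, assume $D_a,d_a$ are finite and that all the stated inequalities hold. Let $S$ be the unital semiring generated by $\{D_{1,a},D_{2,a}:a\in A\}$. The hypotheses say exactly that $L$ is nonnegative on every product of generators; since $L(1)=1\ge 0$ and every element of $S$ is a finite sum of nonnegative real multiples of such products, linearity of $L$ gives $L\ge 0$ on all of $S$. The two identities $D_{1,a}+D_{2,a}=D_a-d_a$ and $D_{2,a}-D_{1,a}=2a-(D_a+d_a)$ recover $a$ as an affine combination of the generators and a constant, which is precisely the input needed to apply \cite{M1}, Theorem 2.1 (equivalently \cite{SCH4}, Corollary 12.47), just as $C_{1,a}+C_{2,a}=2C_a$ and $C_{2,a}-C_{1,a}=2a$ were used before. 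I would also verify that $S$ is archimedean: since $D_a-a=D_{1,a}\in S$ and $a-d_a=D_{2,a}\in S$, adding suitable nonnegative real constants (which lie in $S$) produces $M_a-a\in S$ and $M_a+a\in S$ for some $M_a>0$, for every $a$. The cited semiring moment theorem then furnishes a unique representing measure supported on the compact set $K=\{\alpha\in X(A):\alpha(D_{1,a})\ge 0,\ \alpha(D_{2,a})\ge 0 \text{ for all }a\}=\{\alpha:d_a\le\alpha(a)\le D_a\}$, compact as a closed subset of $\prod_a[d_a,D_a]$.

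The point I would stress is that positive semidefiniteness of $L$ is never assumed in the converse direction: the archimedean-semiring machinery produces the measure (and hence $L(a^2)\ge 0$ a posteriori) directly from nonnegativity on $S$. Consequently the only genuine work, and the step to handle with care, is the bookkeeping that packages the hypotheses as ``$L\ge 0$ on the archimedean semiring $S$'' — namely the two algebraic identities and the archimedean verification — after which existence, uniqueness, and the support statement follow from the quoted theorem.
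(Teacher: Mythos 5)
Your proposal is correct and follows essentially the same route as the paper, whose entire proof is the one-line citation of \cite{M1}, Theorem 2.1, or \cite{SCH4}, Corollary 12.47, in direct analogy with the earlier symmetric case where the identities $C_{1,a}+C_{2,a}=2C_a$ and $C_{2,a}-C_{1,a}=2a$ play the role of your $D_{1,a}+D_{2,a}=D_a-d_a$ and $D_{2,a}-D_{1,a}=2a-(D_a+d_a)$. You merely make explicit the bookkeeping (the forward direction via Theorem \ref{MRA} and integration, and the archimedean-semiring packaging) that the paper leaves implicit.
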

\begin{proof}
The result is a consequence of \cite{M1}, Theorem 2.1 or  \cite{SCH4}, Corollary 12.47.
\end{proof}
\begin{prop}\label{prelmain} Let $A$ be an unital commutative $\mathbb R$-algebra.
Let $L:A\to\mathbb R $ be a  positive semidefinite functional on $A$ with $L(1)=1$.
Then there exists a unique representing Radon measure $\nu_L$ for $L$ with compact support  if and only if the sets $\{M\in \mathbb R: M-a\in Q_L\}$ and $\{M\in \mathbb R: a-M\in Q_L\}$
are  non empty for every $a\in A$ and we have
$$D_a=\inf \{M\in \mathbb R: M-a\in Q_L\}$$
and
$$d_a=\sup \{M\in \mathbb R: a-M\in Q_L\}$$
\end{prop}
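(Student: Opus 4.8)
The plan is to reduce the statement entirely to Theorem \ref{MRA} by computing the two sets $\{M\in\mathbb R: M-a\in Q_L\}$ and $\{M\in\mathbb R: a-M\in Q_L\}$ explicitly in terms of $D_a$ and $d_a$. First I would unwind the definition of $Q_L$: for $M\in\mathbb R$ the condition $M-a\in Q_L$ means $L\big(b^2(M-a)\big)\ge 0$ for every $b\in A$, i.e.\ $M\,L(b^2)\ge L(ab^2)$ for every $b\in A$. Since $L$ is positive semidefinite we have $L(b^2)\ge 0$ for all $b$, so for those $b$ with $L(b^2)>0$ this inequality says exactly $M\ge L(ab^2)/L(b^2)$.

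The one point requiring care is the degenerate directions $b$ with $L(b^2)=0$. Here I would invoke the Cauchy--Schwarz inequality for the positive semidefinite bilinear form $(u,v)\mapsto L(uv)$, applied to $u=ab$ and $v=b$: this gives $L(ab^2)^2=L\big((ab)\,b\big)^2\le L\big((ab)^2\big)\,L(b^2)$, so $L(b^2)=0$ forces $L(ab^2)=0$. Hence the inequality $M\,L(b^2)\ge L(ab^2)$ is automatically satisfied, and imposes no constraint, whenever $L(b^2)=0$. Consequently $\{M\in\mathbb R: M-a\in Q_L\}=\{M\in\mathbb R: M\ge D_a\}$, which is nonempty if and only if $D_a<\infty$, and in that case has infimum exactly $D_a$. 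The identical computation applied to $a-M\in Q_L$, i.e.\ $L(ab^2)\ge M\,L(b^2)$ for all $b$, yields $\{M\in\mathbb R: a-M\in Q_L\}=\{M\in\mathbb R: M\le d_a\}$, which is nonempty if and only if $d_a>-\infty$, with supremum $d_a$.

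Putting these two identities together, both sets are nonempty for every $a\in A$ precisely when $D_a<\infty$ and $d_a>-\infty$ for every $a\in A$, and in that situation the two displayed equalities $D_a=\inf\{M: M-a\in Q_L\}$ and $d_a=\sup\{M: a-M\in Q_L\}$ hold automatically, by the previous paragraph. Since Theorem \ref{MRA} asserts that, for a positive semidefinite $L$, the existence of a unique compactly supported representing measure $\nu_L$ is equivalent to the finiteness conditions $D_a<\infty$ and $d_a>-\infty$ for all $a\in A$, the desired equivalence follows immediately.

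I expect the only genuine obstacle to be the treatment of the null directions $L(b^2)=0$; without the Cauchy--Schwarz observation one would worry that membership in $Q_L$ could encode a constraint invisible to the ratios defining $D_a,d_a$, and the argument showing the two sets coincide with half-lines would break. Everything else is a direct rewriting of the definition of $Q_L$. I would also dispose of the trivial edge case $L=0$ at the outset: since $L(1)=L(1^2)\ge 0$ and, by Cauchy--Schwarz, $L(1)=0$ forces $L\equiv 0$, we may assume $L(1)>0$, so that $x=1$ witnesses $L(x^2)\neq 0$ and the suprema and infima defining $D_a,d_a$ are taken over a nonempty index set.
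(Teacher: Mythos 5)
Your proof is correct and follows essentially the same route as the paper's: both translate $M-a\in Q_L$ into the inequalities $M\,L(b^2)\ge L(ab^2)$ for all $b$, compare the resulting bounds with $D_a$ and $d_a$, and conclude via Theorem \ref{MRA}. Your explicit Cauchy--Schwarz treatment of the null directions $L(b^2)=0$ (and of the case $L\equiv 0$) justifies a step the paper leaves implicit, namely that $D_a-a\in Q_L$ and $a-d_a\in Q_L$ hold for \emph{every} $b$, not only those with $L(b^2)>0$.
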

\begin{proof}
Suppose that  the sets $\{M\in \mathbb R: M-a\in Q_L\}$ and $\{M\in \mathbb R: a-M\in Q_L\}$ are non empty.

We denote 
$$\inf \{M\in [0,\infty)| M-a\in Q_L\}=D_a^\prime$$
and
$$\sup \{M\in \mathbb R: a-M\in Q_L\}=d_a^\prime$$
We have
$$D_a^\prime L(b^ 2)-L(ab^ 2)\ge 0$$
and
$$L(ab^ 2)-d_a^\prime L(b^ 2)\ge 0$$
for every $b\in A$.

Consequently $D_a^\prime\ge D_a$ and $d_a^\prime\le d_a$.

Now suppose that $$D_a=\sup_{x\in A,L(x^ 2)\ne 0} \frac{L(ax^ 2)}{L(x^ 2)}<\infty,\,\,\text{for all }a\in A$$
and
$$d_a=\inf_{x\in A,L(x^ 2)\ne 0} \frac{L(ax^ 2)}{L(x^ 2)}>-\infty,\,\,\text{for all }a\in A$$

Because
$$D_a L(b^ 2)-L(ab^ 2)\ge 0$$
and
$$L(ab^ 2)-d_a L(b^ 2)\ge 0$$
for every $b\in A$,  $D_a-a\in Q_L$ and $a-d_a\in Q_L$ and we get:
$D_a^\prime\le D_a$ and $d_a^\prime\ge d_a$.
\end{proof}
The next result shows that the characterization from \cite{MSTP} and the characterization from this section are equivalent.
\begin{theorem}\label{gend_aD_a} Let $A$ be an unital commutative $\mathbb R$-algebra.
Let $L:A\to\mathbb R $ be a  positive semidefinite linear functional on $A$ with $L(1)=1$.

Then  $C_a=\sup_{n\in \mathbb N}\sqrt[2n] {L(a^ {2n})}<\infty,a\in A$ if and only if  $D_a<\infty$ and $d_a>-\infty$.

Moreover, we have $-C_a\le d_a\le D_a\le C_a$ and at least one of the equalities $C_a=D_a$ or $-C_a=d_a$ is true.
\end{theorem}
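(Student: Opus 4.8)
The plan is to prove the substantive (forward) implication by realising $D_a$, $d_a$ and $C_a$ simultaneously as extrema of the evaluation $\alpha\mapsto\alpha(a)$ over the support of a single representing measure, and then to dispose of the converse directly.

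Assume $C_a<\infty$ for every $a$. Since $L$ is positive semidefinite and $L(1)=1$, I would first invoke Theorem~\ref{MR} with the trivial constraint $Q=\emptyset$ (so the condition $\alpha(q)\ge0$ is vacuous and $Q\subseteq Q_L$ holds trivially): this produces a unique representing measure $\nu_L$ with compact support contained in $K_C=\{\alpha\in X(A):|\alpha(a)|\le C_a\text{ for all }a\}$. Writing $L(a)=\int\alpha(a)\,d\nu_L(\alpha)$ and estimating $L(ax^2)=\int\alpha(a)\alpha(x)^2\,d\nu_L\le(\sup_{\mathrm{supp}\,\nu_L}\alpha(a))L(x^2)$, together with the symmetric lower estimate, yields at once $D_a\le\sup_{\mathrm{supp}}\alpha(a)\le C_a$ and $d_a\ge\inf_{\mathrm{supp}}\alpha(a)\ge -C_a$. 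In particular $D_a<\infty$ and $d_a>-\infty$, which settles the first part of the right-hand side.

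Next I would pin down the extrema. Put $S_a=\sup_{\alpha\in\mathrm{supp}(\nu_L)}\alpha(a)$ and $s_a=\inf_{\alpha\in\mathrm{supp}(\nu_L)}\alpha(a)$, so the estimates above read $D_a\le S_a$ and $d_a\ge s_a$. For the reverse inequalities I would use that $D_a,d_a$ are now finite: Theorem~\ref{MRA} supplies a compactly supported representing measure whose support lies in $\{\alpha:d_a\le\alpha(a)\le D_a\}$, and by uniqueness of the compactly supported representing measure this measure is exactly $\nu_L$. Hence $\mathrm{supp}(\nu_L)\subseteq\{\alpha:d_a\le\alpha(a)\le D_a\}$, giving $S_a\le D_a$ and $s_a\ge d_a$, so that $S_a=D_a$ and $s_a=d_a$ for every $a$. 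Finally I relate $C_a$ to these extrema: from the proposition giving $(C_a)^2=C_{a^2}=\sup_{b}\frac{L(a^2b^2)}{L(b^2)}$ one reads off $(C_a)^2=D_{a^2}$, and applying the identity $S_a=D_a$ to the element $a^2$ gives $D_{a^2}=S_{a^2}=\sup_\alpha\alpha(a)^{2}=(\max(S_a,-s_a))^2$. Thus $C_a=\max(S_a,-s_a)=\max(D_a,-d_a)$. (Alternatively $C_a=\max(S_a,-s_a)$ follows by squeezing $L(a^{2n})=\int|\alpha(a)|^{2n}\,d\nu_L$ between $(\sup_{\mathrm{supp}}|\alpha(a)|)^{2n}$ and $(\sup_{\mathrm{supp}}|\alpha(a)|-\varepsilon)^{2n}\nu_L(V_\varepsilon)$, with $\nu_L(V_\varepsilon)>0$ on the open set $V_\varepsilon$ where $|\alpha(a)|$ is near its maximum.) Since $C_a=\max(D_a,-d_a)$, for each $a$ at least one of $C_a=D_a$ or $-C_a=d_a$ holds, finishing this direction. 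The converse then needs no work: if $D_a<\infty$, $d_a>-\infty$, and for every $a$ one of $C_a=D_a$ or $-C_a=d_a$ holds, then $C_a$ equals either $D_a$ or $-d_a$, hence $C_a<\infty$ in both cases.

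The hard part will be the sharp identities $S_a=D_a$, $s_a=d_a$ and the resulting $C_a=\max(S_a,-s_a)$: the inequalities $D_a\le S_a$, $d_a\ge s_a$ are immediate from the integral representation, but the reverse ones require knowing that the support of $\nu_L$ actually attains the bounds $D_a$ and $d_a$. The efficient route I would take avoids any explicit approximation and instead uses uniqueness to identify the measure of Theorem~\ref{MR} with the sharply localized measure of Theorem~\ref{MRA}, which transfers the localization $\{d_a\le\alpha(a)\le D_a\}$ to $\nu_L$ for free; a self-contained alternative would approximate a bump function around an extremal character by some $\alpha\mapsto\alpha(x)^2$ via Stone--Weierstrass, but the uniqueness argument is shorter.
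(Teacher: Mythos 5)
Your argument is correct, and it runs on the same engine as the paper's proof: playing the representation supported in $K_L=\{\alpha:|\alpha(a)|\le C_a\}$ (Theorem \ref{MR}) against the representation supported in $\{\alpha:d_a\le\alpha(a)\le D_a\}$ (Theorem \ref{MRA}) so as to squeeze $C_a$ against $\max(D_a,-d_a)$ from both sides. The differences are in the finishing moves. For $C_a\ge D_a$ and $C_a\ge -d_a$ the paper observes that $C_a\pm a\in Q_L$, which is exactly the estimate you obtain by integrating over $\mathrm{supp}\,\nu_L\subseteq K_L$. For the reverse inequality the paper does a four-case sign analysis on $D_a$ and $d_a$ using only $|\alpha(a)|\le\max(D_a,-d_a)$ on the support, whereas you first prove the sharper identities $S_a=D_a$ and $s_a=d_a$ by identifying the two measures through uniqueness, and then invoke $(C_a)^2=C_{a^2}=D_{a^2}$; this is slightly longer but re-derives, as a by-product, the proposition that follows Theorem \ref{MRA} in the paper, and it is a legitimate alternative (your direct squeezing of $L(a^{2n})$ would also do). The one place where you deliver less than the paper is the converse: you treat the equality ``$C_a=D_a$ or $-C_a=d_a$'' as part of the hypothesis, which makes that implication vacuous, while the paper proves the stronger and more useful fact that finiteness of $D_a$ and $d_a$ alone already forces $C_a<\infty$ (via Theorem \ref{MRA} and the argument of Theorem 1.2 of Infusino--Kuhlmann--Kuna--Michalski). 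Under the literal reading of the statement your converse is logically sufficient, but be aware that the intended content includes the unconditional implication from finiteness of $D_a,d_a$ to finiteness of $C_a$, which your write-up does not establish.
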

\begin{proof}Suppose $C_a=\sup_{n\in \mathbb N}\sqrt[2n] {L(a^ {2n})}<\infty,a\in A$. Because the integral representation from Theorem \ref{MR} yields $C_a-a\in Q_L$ and $C_a+a\in Q_L$ it results from Proposition \ref{prelmain} that $C_a\ge D_a$ and $-C_a\le d_a$.

Now if $D_a<\infty$ and $d_a>-\infty$ we have, using theorem \ref{MRA}
$$L(a)=\int_{K}\alpha(a)d\nu (\alpha),a\in A$$
where
$$K=\left\{\alpha\in X(A):d_a\le \alpha(a)\le D_a\,\,\text{for all }a\in A\right\}$$
and $\nu$ is a positive Radon measure on $K$.
This  yields, as in \cite{MSTP}, proof of Theorem 1.2, $C_a=\sup_{n\in \mathbb N}\sqrt[2n] {L(a^ {2n})}<\infty,a\in A$.

Now there are four cases
\begin{description}
\item [1. $D_a\ge0$ and $ d_a\ge 0$] in this case  $C_a\le D_a$ and consequently $C_a= D_a$;

\item [2.  $D_a\le0$ and $ d_a\le 0$] in this case $C_a\le -d_a$ and consequently $-C_a= d_a$;

\item [3. $D_a\ge0$ and $ d_a\le 0$ and $D_a\ge -d_a$] in this case $C_a\le D_a$ and consequently $C_a= D_a$;

\item [4. $D_a\ge0$ and $ d_a\le 0$ and $D_a\le -d_a$] in this case $C_a\le -d_a$ and consequently $-C_a= d_a$.
\end{description}
\end{proof}
\begin{corollary}The function $a\mapsto \max(-d_a,D_a)$ is a seminorm on the algebra $A$.
\end{corollary}
\begin{proof}
This is a consequence of the equality $C_a=\max(-d_a,D_a)$ and of \cite {MSTP}, Lemma 3.3.
\end{proof}

\begin{prop}\label{d_aD_agen}
Let   $A$ be generated by a set $G$ and let $ L:A\to \mathbb R$ be a linear functional. If  $D_a<\infty$ and $d_a>-\infty$ for all $a\in G$ then  $D_a<\infty$ and $d_a>-\infty$ for all $a\in A$.

\end{prop} 
\begin{proof}If  $D_a<\infty$ and $d_a>-\infty$ for all $a\in G$ it results, as in the proof of Theorem \ref{gend_aD_a} that $C_a<\infty $ for all $a\in G$. Now from \cite{MSTP}, Proposition 4.3 
it results that $C_a<\infty $ for all $a\in A$. Consequently again as in the proof of Theorem \ref{gend_aD_a} we obtain $D_a<\infty$ and $d_a>-\infty$ for all $a\in A$.

\end{proof}
Using Proposition \ref{d_aD_agen} we relax, in the next two results, the requirements in the Theorems \ref{MRA} and \ref{prodD}.These results are interesting for applications to the algebra 
$A=\mathbb R[X_1,\dots,X_m]$.
\begin{theorem}\label{MRAG}
Let $A$ be an unital commutative $\mathbb R$-algebra generated by a set $G$.
Let $L: A\to\mathbb R $ be a  positive semidefinite functional on $A$.
Then, there exists a unique representing Radon measure $\nu_L$ for $L$ with compact support  if and only if
$$D_a<\infty,\,\,\text{for all }a\in G$$
and

$$d_a>-\infty,\,\,\text{for all }a\in G.$$
Moreover in this case the support of the representing measure for $L$ is a subset of the set

$$D_L=\{\alpha \in X(A):d_a\le\alpha(a)\le D_a\text { for all }a\in A\}$$
and we have
$$D_L=\{\alpha \in X(A):d_a\le\alpha(a)\le D_a\text { for all }a\in G\}.$$
\end{theorem}
\begin{proof}
The result is a consequence Theorem \ref{MRA} and Proposition \ref{d_aD_agen}.
\end{proof}

\begin{theorem}
Let   $A$ be generated by a set $G$ and let $ L:A\to \mathbb R$ be a linear functional. Then, there exists a unique representing measure $\nu_L$ for L with
compact support if and only if 

$$D_a<\infty\,\,,d_a>-\infty,a\in G$$
and   
$$L(D_{j_1,a_1}\dots D_{j_n,a_n})\ge 0,a_1,\dots,a_n\in G,j_1,\dots,j_n\in \{1,2\},n\in \mathbb N,$$
where $D_{1,a}$ and $D_{2,a}$ are as in Theorem \ref{prodD}.

Moreover in this case the support of the representing measure for $L$ is a subset of the set $D_L$ defined in Proposition \ref{MRAG}.

\end{theorem}
\begin{proof}
The result is a consequence of \cite{M1}, Theorem 2.1 and of  Theorem \ref{MRA} and Proposition \ref{d_aD_agen}.
\end{proof}
\section{Applications of the intrinsic characterizations}

\subsection{A  Berg-Maserick type result}
Inspired by \cite{SCH2}, Corollary 12.16, we get the following result, which is an immediate consequence of \cite{MSTP},Proposition 4.3 and Theorem \ref{MR}.
Note that in this theorem we relax the requirements from Theorem \ref{PreMR}. See also  \cite{BCR}, Theorem 2.5 and \cite{BM}, Theorem 2.1.
\begin{theorem}\label{genBM}Let $A$ be an unital commutative $\mathbb R$-algebra and $G$ a set of generators for $A$. Let  $(\gamma_a)_{a\in G}$ and $(\delta_a)_{a\in G}$  be families of positive real numbers.

For a function $ L:A\to \mathbb R$ the following conditions are equivalent:
\begin{enumerate}
\item[(i)] L is a linear positive semidefinite functional and
$$L(a^{2n})\le \gamma_a \delta_a^ {2n},n\in\mathbb N,a\in G;$$
\item[(ii)]There is a positive Radon measure $\mu$ on $K$ such that

$$L(a)=\int_{K}\alpha (a)d\mu(\alpha),a\in A$$
where

$$K=\{\alpha \in X(A):|\alpha(a)|\le \delta_a,a\in G\}.$$
\end{enumerate}
\end{theorem}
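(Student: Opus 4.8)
The plan is to prove the equivalence by reducing condition (i) to a form where Theorem \ref{MR} applies directly, and conversely reading off (i) from the integral representation in (ii). The key observation is that the hypothesis in (i) involves a growth bound only on the \emph{generators} $a\in G$, whereas Theorem \ref{MR} (via the numbers $C_a$) requires control of $L(a^{2n})$ for \emph{all} $a\in A$. So the first and central step is to propagate the bound from $G$ to all of $A$.

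First I would establish the direction (i)$\Rightarrow$(ii). Assuming $L$ is positive semidefinite with $L(a^{2n})\le \gamma_a d_a^{2n}$ for $a\in G$, I would show that $C_a=\sup_{n}\sqrt[2n]{L(a^{2n})}<\infty$ for every $a\in A$. For a generator $a\in G$ this is immediate, since $\sqrt[2n]{L(a^{2n})}\le \sqrt[2n]{\gamma_a}\,d_a\to d_a$, giving $C_a\le d_a$. The hard part will be extending finiteness of $C_a$ from generators to arbitrary elements of $A$: here I would invoke that $a\mapsto C_a$ is an absolute value (or at least a seminorm), as recalled from \cite{MSTP}, Lemma 3.3 and used in the second proof of Theorem \ref{MR}. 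Since every $b\in A$ is a polynomial in finitely many generators, submultiplicativity $C(xy)\le C(x)C(y)$ together with subadditivity and $C(\lambda)=|\lambda|$ forces $C_b<\infty$. With $C_a<\infty$ for all $a\in A$ secured, and taking $Q=\{0\}$ (so the sign constraints are vacuous and $Q\subseteq Q_L$ trivially), Theorem \ref{MR} yields a unique representing measure $\nu_L$ supported in $\{\alpha:|\alpha(a)|\le C_a,\ a\in A\}$. Since $C_a\le d_a$ on generators and characters are multiplicative, I would check that this support lies inside $K=\{\alpha:|\alpha(a)|\le d_a,\ a\in A\}$; again the generating property reduces the verification to $a\in G$.

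For the converse (ii)$\Rightarrow$(i), the argument is routine: given the representation $L(b)=\int_K\alpha(b)\,d\mu$, positive semidefiniteness follows from $L(b^2)=\int_K\alpha(b)^2\,d\mu\ge 0$, and for $a\in G$ one computes
$$L(a^{2n})=\int_K\alpha(a)^{2n}\,d\mu(\alpha)\le d_a^{2n}\,\mu(K),$$
so the bound holds with $\gamma_a=\mu(K)=L(1)$, a finite constant independent of $a$.

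The main obstacle, and the step deserving the most care, is the propagation of $C_a<\infty$ from the generating set $G$ to all of $A$, since this is precisely what lets us apply Theorem \ref{MR} whose hypothesis is stated for every $a\in A$. Everything else is either a direct citation of Theorem \ref{MR} or an elementary estimate on the integral side. I would also remark, as the surrounding text suggests, that this formulation relaxes Theorem \ref{PreMR}: we no longer assume a single global absolute value bounding $L$, but only a family of scalar growth conditions on generators, from which the absolute value $C_a$ is reconstructed intrinsically.
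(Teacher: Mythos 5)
Your proposal is correct and takes the same route as the paper: the paper gives no written argument beyond declaring the theorem ``an immediate consequence of Theorem \ref{MR},'' and your write-up supplies exactly the details of that deduction --- obtaining $C_a\le d_a$ on generators, propagating finiteness of $C_a$ to all of $A$ via the absolute-value/seminorm property of $a\mapsto C_a$ from \cite{MSTP}, Lemma 3.3 (the same fact the paper invokes in its first proof of Theorem \ref{MR}), and reading the converse off the integral representation with $\gamma_a=\mu(K)=L(1)$. The only cosmetic remarks are that $K$ must be read with $a\in G$, since $d_a$ is defined only for generators, and that the unstated normalization $L(1)=1$ required by Theorem \ref{MR} is handled by rescaling (the case $L(1)=0$ forcing $L\equiv 0$ by Cauchy--Schwarz).
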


\subsection{The moment problem on a ball}

\begin{prop}\label{sf}
Let $A$ be an unital algebra with generators $a_1,\dots,a_m$.
Let $L:A\to\mathbb R $ be a linear positive semidefinite function on $A$ such that $L(1)=1$.
Then there exists a unique representing Radon measure $\nu_L$ for $L$ with compact support  if and only if 
$$C_{a_1^ {2}+\dots+a_m^ {2}}=\sup_{n\in \mathbb N}\sqrt[2n] {L((a_1^ {2}+\dots+a_m^ {2})^ {2n})}<\infty$$

Moreover, in this case 
$$supp(\nu_L)\subseteq\left\{\alpha\in X(A):|\alpha(a_1)|^ 2+\dots+|\alpha(a_m)|^ 2\le C_{a_1^ {2}+\dots+a_m^ {2}}\right\}.$$
\end{prop}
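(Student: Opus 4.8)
The plan is to reduce everything to Theorem \ref{MR}, whose hypothesis requires $C_a<\infty$ for \emph{every} $a\in A$. Since here we only assume finiteness of $C_b$ for the single element $b=a_1^2+\dots+a_m^2$, the crux of the argument is to propagate this finiteness from $b$ to every element of $A$. I would handle the two implications separately, with the support inclusion falling out at the end.

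First I would dispatch the easy "only if" direction, which does not involve the generators at all. If $\nu_L$ is a representing measure with compact support $K'\subseteq X(A)$, then $\alpha\mapsto\alpha(b)$ is continuous, hence bounded by some $M$ on $K'$; as $L(1)=1$ forces $\nu_L(K')=1$, we get $L(b^{2n})=\int_{K'}\alpha(b)^{2n}\,d\nu_L(\alpha)\le M^{2n}$, so that $C_b\le M<\infty$. For the "if" direction the strategy is: (a) prove $C_{a_i}<\infty$ for each generator $a_i$; (b) recall that $a\mapsto C_a$ is a submultiplicative seminorm (\cite{MSTP}, Lemma 3.3, together with the absolute value property used in the first proof of Theorem \ref{MR}), so that, $A$ being generated by $a_1,\dots,a_m$, the finiteness of $C$ on the generators forces $C_a<\infty$ for all $a\in A$; (c) apply Theorem \ref{MR} with trivial $Q$ to obtain the unique compactly supported $\nu_L$, with support contained in $\{\alpha\in X(A):|\alpha(a)|\le C_a,\ a\in A\}$.

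The heart of the matter is step (a), which I expect to be the only nonroutine point. I would use the previously established identity $(C_{a_i})^2=C_{a_i^2}=R_{a_i}=\sup_{x:\,L(x^2)\neq0}L(a_i^2x^2)/L(x^2)$. Since $L$ is positive semidefinite, $b-a_i^2=\sum_{j\neq i}a_j^2$ satisfies $L\bigl(x^2(b-a_i^2)\bigr)=\sum_{j\neq i}L\bigl((a_jx)^2\bigr)\ge0$, whence $L(a_i^2x^2)\le L(bx^2)$ for every $x$. Applying the Cauchy--Schwarz inequality for the positive semidefinite form $(u,v)\mapsto L(uv)$ to $u=bx$, $v=x$, together with $L(b^2x^2)\le C_{b^2}L(x^2)=(C_b)^2L(x^2)$, gives $L(bx^2)\le\sqrt{L(b^2x^2)}\,\sqrt{L(x^2)}\le C_b\,L(x^2)$. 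Combining the two estimates yields $(C_{a_i})^2=\sup_x L(a_i^2x^2)/L(x^2)\le C_b$, so $C_{a_i}\le\sqrt{C_b}<\infty$, as desired.

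Finally, the support inclusion is immediate from the support statement of Theorem \ref{MR}: any $\alpha\in\mathrm{supp}(\nu_L)$ satisfies $|\alpha(a)|\le C_a$ for all $a\in A$, so in particular $\alpha(b)=\sum_{j}\alpha(a_j)^2=\sum_{j}|\alpha(a_j)|^2\ge0$ gives $\sum_{j}|\alpha(a_j)|^2=\alpha(b)\le C_b=C_{a_1^2+\dots+a_m^2}$. Everything outside step (a) is bookkeeping once the seminorm and submultiplicativity of $C$, together with the identity $C_{a^2}=(C_a)^2=R_a$, are in hand.
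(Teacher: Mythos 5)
Your proposal is correct and follows the same overall architecture as the paper's proof: both directions reduce to Theorem \ref{MR}, the key point being that positive semidefiniteness propagates the finiteness of $C$ from the single element $b=a_1^2+\dots+a_m^2$ to the generators, and then (a step the paper leaves implicit) to all of $A$ via submultiplicativity of $a\mapsto C_a$. The difference lies in how the bound on $C_{a_i}$ is extracted. The paper does it in one line by comparing pure powers: $b^{2n}-(a_i^2)^{2n}$ is a sum of squares, so $L(a_i^{4n})\le L(b^{2n})\le C_b^{2n}$, which gives $C_{a_i}\le\sqrt{C_b}$ with no auxiliary quantities. You instead route through $R_{a_i}=\sup_x L(a_i^2x^2)/L(x^2)$ and a Cauchy--Schwarz step; this works and yields the same bound $C_{a_i}\le\sqrt{C_b}$, but two of the facts you import deserve care. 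The identity $(C_a)^2=C_{a^2}=R_a$ and the inequality $L(b^2x^2)\le C_{b^2}L(x^2)$ are both established in the paper under the standing assumption that $C_a<\infty$ for all $a\in A$ (the former via the integral representation itself), which is precisely what you are in the middle of proving. This is not a fatal circularity: the only direction you need, $(C_{a_i})^2\le R_{a_i}$, follows elementarily from $L(a^{2n})\le R_a\,L(a^{2n-2})\le\dots\le R_a^{\,n}$, and $L(b^2x^2)\le (C_b)^2L(x^2)$ can be obtained by running the boundedness argument from the proof of Theorem \ref{PreMR} on the subsemigroup generated by $b$ alone, where only $C_b<\infty$ is needed. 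But as written those steps lean on statements whose proofs presuppose the conclusion, so you should either supply these elementary substitutes or adopt the paper's direct power comparison, which sidesteps the issue entirely. Your handling of the ``only if'' direction, of the passage from the generators to all of $A$, and of the support inclusion is correct and in fact more explicit than the paper's.
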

\begin{proof}
Because $L$ is positive semidefinite, we have 
$$\left(L(a_j^ {2n})\right)^ 2\le L((a_1^ {2}+\dots+a_m^ {2})^ {2n})$$
for all $j\in\{1,\dots,m\}$ and $n\in \mathbb  N$.
Now the result is an immediate consequence of Theorems \ref{genBM} and  \ref{MR} and \cite{MSTP},Proposition 4.3.
\end{proof}
Using the previous result, we can obtain the following solution of the moment problem on a ball.

\begin{prop}\label{Msf}
Let $A$ be an unital algebra with generators $a_1,\dots,a_m$.
For a linear function  $L:A\to\mathbb R $ such that $L(1)=1$ the following conditions are equivalent :
\begin{enumerate}

\item the functional $L$ is positive semidefinite, and we have  
$$C_{a_1^ {2}+\dots+a_m^ {2}}=\sup_{n\in \mathbb N}\sqrt[2n] {L((a_1^ {2}+\dots+a_m^ {2})^ {2n})}\le r^2;$$
\item the functional $L$ is positive semidefinite, and we have  
$$r^2L(b^ 2)-L(b^ 2(a_1^2+\dots+a_m^2))\ge 0\,\,\text{for all}\,\,b\in A;$$
\item there is a unique positive Radon measure $\mu$ on 
$$K=\left\{\alpha\in X(A):|\alpha(a_1)|^ 2+\dots+|\alpha(a_m)|^ 2\le r^2\right\}$$
such that
$$L(a)=\int_K\alpha(a)d\mu(\alpha), a\in A.$$
\end{enumerate}
\end{prop}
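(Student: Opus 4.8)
The plan is to set $p := a_1^2+\cdots+a_m^2$ and first record that, since each character $\alpha$ is an $\mathbb R$-algebra homomorphism, $|\alpha(a_j)|^2=\alpha(a_j^2)$, so that $|\alpha(a_1)|^2+\cdots+|\alpha(a_m)|^2=\alpha(p)$ and the ball in (3) is exactly $K=\{\alpha\in X(A):\alpha(p)\le r^2\}$. I would then prove the three statements equivalent by establishing the intrinsic equivalence (1)$\iff$(2), the implication (1)$\Rightarrow$(3) from Proposition \ref{sf}, and the implication (3)$\Rightarrow$(1) by an elementary estimate against the representing measure.

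For (1)$\iff$(2) I would work with $D_p=\sup_{x\in A,\,L(x^2)\ne 0}L(px^2)/L(x^2)$. Because $L$ is positive semidefinite and $p$ is a sum of squares, $L(px^2)=\sum_j L((a_jx)^2)\ge 0$, so $p\in Q_L$ and the companion infimum $d_p$ satisfies $d_p\ge 0$. Reading condition (2) as $r^2L(s^2)-L(s^2p)\ge 0$ for all $s\in A$, i.e. $r^2-p\in Q_L$, one checks directly that this is equivalent to $D_p\le r^2$: if $r^2-p\in Q_L$ then $r^2L(x^2)\ge L(px^2)$ for all $x$, forcing $D_p\le r^2$; conversely $D_p\le r^2$ gives $D_p-p\in Q_L$ (the standard $Q_L$-membership of $D_p-a$) and $r^2-p=(D_p-p)+(r^2-D_p)\in Q_L$ since $r^2-D_p\ge 0$. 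Finally, because $d_p\ge 0$ and hence $D_p\ge d_p\ge 0$, the theorem comparing $C_a$ with $D_a$ and $d_a$ yields $C_p=D_p$ (in the direction (1)$\Rightarrow$(2) the finiteness $C_p\le r^2$ supplies finiteness of $D_p$; in the direction (2)$\Rightarrow$(1) the bound $D_p\le r^2$ together with $d_p\ge 0$ supplies $C_p<\infty$). Thus (2)$\iff D_p\le r^2\iff C_p\le r^2\iff$(1).

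For (1)$\Rightarrow$(3): condition (1) gives $C_p<\infty$, so Proposition \ref{sf} furnishes a unique representing measure $\nu_L$ for $L$ with compact support contained in $\{\alpha:\alpha(p)\le C_p\}\subseteq\{\alpha:\alpha(p)\le r^2\}=K$, hence a positive Radon measure on $K$. Since the generators obey $\alpha(a_j)^2\le r^2$ on $K$, the set $K$ is compact, so any Radon measure carried by $K$ is a representing measure with compact support; the uniqueness clause of Proposition \ref{sf} then identifies it with $\nu_L$, giving both existence and uniqueness in (3). For (3)$\Rightarrow$(1): from $L(a)=\int_K\alpha(a)\,d\mu$ we get $L(a^2)=\int_K\alpha(a)^2\,d\mu\ge 0$, so $L$ is positive semidefinite, and since $0\le\alpha(p)\le r^2$ on $K$ and $\mu(K)=L(1)=1$ we obtain $L(p^{2n})=\int_K\alpha(p)^{2n}\,d\mu\le r^{4n}$, whence $\sqrt[2n]{L(p^{2n})}\le r^2$ for every $n$ and $C_p\le r^2$, which is (1).

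The heart of the argument, and where I expect the only real subtlety, is the intrinsic equivalence (1)$\iff$(2): the implications to and from (3) are just the assembly of Proposition \ref{sf} with a one-line integral estimate, whereas (1)$\iff$(2) hinges on the identity $C_p=D_p$, which is special to $p$ being a sum of squares (so that $d_p\ge 0$) and relies on the comparison theorem between $C_a$, $D_a$, $d_a$. A secondary point to verify carefully is the compactness of $K$ and the transfer of the uniqueness in Proposition \ref{sf} to uniqueness among all positive Radon measures supported on $K$.
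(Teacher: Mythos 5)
Your argument is correct in substance but organized differently from the paper's. The paper proves the cycle $1\Rightarrow 3\Rightarrow 2\Rightarrow 1$: the first implication is exactly your appeal to Proposition \ref{sf}, while $3\Rightarrow 2$ is read off the integral representation (on $K$ one has $0\le\alpha(p)\le r^2$ with $p=a_1^2+\cdots+a_m^2$) and $2\Rightarrow 1$ follows from the iteration $L(p^{2n})=\sum_jL\bigl(p\,(a_jp^{n-1})^2\bigr)\le r^2L(p^{2n-1})\le\cdots\le r^{4n}$, which uses only $r^2-p\in Q_L$ together with the fact that $p$ and its odd powers are sums of squares. You instead close the cycle by proving $1\iff 2$ intrinsically through $D_p$ and $d_p$; this is a legitimate alternative and makes visible the pleasant identity $C_p=D_p$ for sums of squares, but it is heavier than needed. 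One step deserves care: in the direction $2\Rightarrow 1$ you invoke the comparison theorem between $C_a$, $D_a$ and $d_a$ to convert $D_p\le r^2$, $d_p\ge 0$ into $C_p\le r^2$. As proved in the paper, the relevant direction of that theorem assumes $D_a<\infty$ and $d_a>-\infty$ for \emph{every} $a\in A$ (its proof passes through the integral representation of Theorem \ref{MRA}), whereas at that point you only control the single element $p$; the repair is precisely the elementary iteration above, so this is a citation mismatch rather than a genuine obstruction. Two smaller points: your reading of condition (2) as $r^2L(s^2)-L(s^2p)\ge 0$, i.e.\ $r^2-p\in Q_L$, is the intended one (the printed ``$2s$'' cannot be meant literally, since linearity in $s$ would force an equality), and your observations that $K=\{\alpha\in X(A):\alpha(p)\le r^2\}$ is compact and that uniqueness in (3) transfers from the uniqueness clause of Proposition \ref{sf} are correct and worth recording, since the paper leaves them implicit.
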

\begin{proof}The implication $1\implies 3 $ results from Proposition \ref{sf}. The equivalence $2\iff 3 $ is a consequence of \cite{DA2},Theorem  3.1. This finishes the proof because the implication $3\implies 1$ is immediate.

\end{proof}
\begin{remark}The equivalence $2\iff3 $ is proved in a different way in \cite{DA2}.
\end{remark}
\subsection{The moment problem on a generalized simplex}

\begin{prop}\label{sim}
Let $A$ be an unital algebra with generators $a_1,\dots,a_m$.
Let $L:A\to\mathbb R $ be a linear positive semidefinite function on $A$ such that $L(1)=1$.
Then there exists a unique representing Radon measure $\nu_L$ for $L$ with compact support  if and only if 
 $d_{a_1},\dots,d_{a_m}>\infty$ and $D_{a_1+\dots+a_m}<\infty$.

Moreover, in this case $supp(\nu_L)$ is a subset of the set
\begin{equation*}\begin{split}\Biggl\{\alpha\in X(A):\alpha(a_1)\ge d_{a_1},\dots\alpha(a_m)\ge d_{a_m},\\\alpha(a_1)+\dots+\alpha(a_m)\le D_{a_1+\dots+a_m}\Biggr\}\end{split}\end{equation*}
\end{prop}
\begin{proof}If the support of $\nu_L$ is compact we have
$a_1-d_1,\dots a_m-d_m\in Q_L$ and $D_1-a_1,\dots D_m-a_m\in Q_L$
which yields $D_1+\dots+D_m-(a_1+\dots+a_m)\in Q_L$ and consequently $D_{a_1+\dots+a_m}<\infty$.

Now if  $a_1-d_1,\dots a_m-d_m\in Q_L$ and $D_{a_1+\dots+a_m}-(a_1+\dots+a_m)\in Q_L$.
then for every $j\in\{1,\dots,m\}$ we have 
$$D_{a_1+\dots+a_m}+d_1+\dots+ d_{j-1}+d_{j+1}+\dots d_m-a_j\in Q_L$$
and it results from Theorem \ref{MRA} that  the support of $\nu_L$ is compact.

The rest of the proof is a consequence of Theorem \ref{MR}.
\end{proof}
\begin{example}\label{Msi}
In this example we consider the algebra $A=\mathbb R[X_1,\dots,X_m]$. The space of characters $X(A)$ can be identified with $\mathbb R^ m$. We have:

For a linear function  $L:\mathbb R[X_1,\dots,X_m]\to\mathbb R $ the following conditions are equivalent :
\begin{enumerate}

\item the functional $L$ is positive semidefinite, and we have  
 $d_{X_1}\ge 0,\dots,d_{X_m}\ge 0$ and $D_{X_1+\dots+X_m}\le 1$;
\item there is a unique positive Radon measure $\mu$ on 
$$K=\left\{(x_1,\dots,x_m)\in\mathbb R^ m:x_1\ge 0,\dots,x_m\ge 0,x_1+\dots+x_m\ge 1\right\}$$
such that
$$L(X_1^{ n_1}\dots X_m^{ n_m})=\int_Kx_1^{ n_1}\dots x_m^{ n_m}d\mu(x_1,\dots,x_m), (n_1,\dots,n_m)\in \mathbb N_0^ m.$$
\end{enumerate}
\end{example}

\subsection{A second Berg-Maserick type result}

\begin{df}The function $v:A\to [0,\infty)$ is a weak absolute value if $v(a^ 2)\le v(a)^ 2$ for all $a\in A$ and $v(1)=1$.
\end{df}
Now we relax as in \cite{GMW}, p.298 the requirement on the absolute value from Theorem \ref{PreMR}.
\begin{theorem}Let $A$ be an unital commutative $\mathbb R$-algebra, $Q\subset A$,  and $v:A\to [0,\infty)$ a weak absolute value.

For a functional $ L:A\to \mathbb R$ such that $L(1)=1$,
the following conditions are equivalent:
\begin{enumerate}
\item[(i)]
$L$ is linear positive semidefinite, $v$-bounded and $Q\subseteq Q_L$ ;
\item[(ii)]There is a positive Radon measure $\mu$ on $K$ such that

$$L(a)=\int_{K}\alpha (a)d\mu(\alpha),a\in A$$
where $K=\{\alpha \in X(A):|\alpha(a)|\le v(a),a\in A\,\, \text{and}\,\,\alpha(q)\ge 0,q\in Q\}$

\end{enumerate}
\end{theorem}
\begin{proof}
Let $a\in A$ and $d\in \mathbb N$. We have 
$$ L(a^ {2^{d+1}})\le Cv(a^ {2^{d+1}})\le C(v(a^ {2^{d}}))^ 2\le\dots\le C(v(a))^{2^{d+1}}.$$
Because the sequence $\{\sqrt[2n] {L(a^ {2n})}\}_{n\in \mathbb N}$ is monotone increasing we have 
$$\sup_{n\in \mathbb N}\sqrt[2n] {L(a^ {2n})}=\sup_{d\in \mathbb N}\sqrt[2^ d] {L(a^ {2^d})}$$
which yields  $C_a\le v(a)$. Now the result is an immediate consequence of Theorem \ref{MR}.
\end{proof}
\subsection{Characterization of the elements of $Q_L$}
\begin{prop}\label{MR2}
Let $ L:A\to \mathbb R$ be a linear positive semidefinite functional with L(1)=1 and $C_a=\sup_{n\in \mathbb N}\sqrt[2n] {L(a^ {2n})}<\infty,a \in A$. Then, $a\in Q_L$ if and only if   $C_{C_a-a}\le C_a$.

\end{prop}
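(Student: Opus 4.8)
The plan is to read both sides of the claimed equivalence off the representing measure supplied by Theorem~\ref{MR}. Since $L$ is positive semidefinite with $L(1)=1$ and $C_a<\infty$ for all $a$, Theorem~\ref{MR} (applied with $Q=\{0\}$) produces a unique positive Radon measure $\nu_L$ whose compact support $S:=\operatorname{supp}(\nu_L)$ lies in $K_L=\{\alpha\in X(A):|\alpha(a)|\le C_a\text{ for all }a\}$ and satisfies $L(a)=\int_S\alpha(a)\,d\nu_L(\alpha)$. I would first record the identity $C_b=\max_{\alpha\in S}|\alpha(b)|$ for every $b\in A$: because $\nu_L(S)=L(1)=1$, one has $\bigl(L(b^{2n})\bigr)^{1/2n}=\|\,\alpha\mapsto\alpha(b)\,\|_{L^{2n}(\nu_L)}$, which is non-decreasing in $n$ and converges to the essential supremum, equal to $\max_{\alpha\in S}|\alpha(b)|$ by continuity and compactness. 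Both conditions in the statement will be decoded through this formula.

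The second step is the elementary translation of the inequality $C_{C_a-a}\le C_a$. Applying the identity to $b=C_a-a$ gives $C_{C_a-a}=\max_{\alpha\in S}|C_a-\alpha(a)|$. Since $|\alpha(a)|\le C_a$ on $S$, the inequality $|C_a-\alpha(a)|\le C_a$ holds exactly when $\alpha(a)\ge 0$ (the bound $\alpha(a)\le 2C_a$ being automatic). Hence $C_{C_a-a}\le C_a$ if and only if $\alpha(a)\ge 0$ for every $\alpha\in S$.

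The decisive step is then the equivalence $a\in Q_L \iff \alpha(a)\ge 0$ on $S$. One direction is immediate: if $\alpha(a)\ge 0$ on $S$ then $L(b^2a)=\int_S\alpha(b)^2\alpha(a)\,d\nu_L\ge 0$ for all $b$, so $a\in Q_L$. For the converse, suppose $a\in Q_L$. The functions $\alpha\mapsto\alpha(b)$, $b\in A$, form a subalgebra of $C(S)$ containing the constants and separating the points of $S\subseteq X(A)$, so by Stone--Weierstrass it is dense. Given $g\in C(S)$ with $g\ge 0$, I would approximate $\sqrt g$ uniformly by functions $\alpha\mapsto\alpha(b_k)$; then $\alpha(b_k)^2\to g$ uniformly, and since $\alpha(a)$ is bounded on $S$ and $\nu_L$ is finite, $\int_S g\,\alpha(a)\,d\nu_L=\lim_k\int_S\alpha(b_k)^2\alpha(a)\,d\nu_L=\lim_k L(b_k^2a)\ge 0$. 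Thus the signed measure $\alpha(a)\,d\nu_L$ assigns a nonnegative value to every nonnegative continuous function, so by regularity it is a positive measure; equivalently $\alpha(a)\ge 0$ holds $\nu_L$-almost everywhere, and by continuity of $\alpha\mapsto\alpha(a)$ together with $S=\operatorname{supp}(\nu_L)$, in fact $\alpha(a)\ge 0$ everywhere on $S$.

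The main obstacle is precisely this converse, namely upgrading the pointwise inequalities $L(b^2a)\ge 0$ to nonnegativity of $\alpha(a)$ on the support. What makes it work is that every nonnegative element of $C(S)$ is a uniform limit of squares drawn from the dense subalgebra $\{\alpha(b):b\in A\}$, which converts ``positive on squares'' into ``positive measure.'' Chaining the three steps gives $a\in Q_L \iff \alpha(a)\ge 0 \text{ on } S \iff C_{C_a-a}\le C_a$, which is the assertion.
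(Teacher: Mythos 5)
Your proof is correct, and the overall strategy is the same as the paper's: represent $L$ by $\nu_L$ and reduce both conditions to the pointwise statement ``$\alpha(a)\ge 0$'' on the relevant compact set. The differences are in how the two key sub-steps are filled in. For the implication $a\in Q_L\Rightarrow\alpha(a)\ge 0$, the paper simply applies Theorem~\ref{MR} with $Q=Q_L$ (or $Q=\{a\}$), whose statement already localizes $\operatorname{supp}(\nu_L)$ inside $\{\alpha:\alpha(q)\ge 0,\ q\in Q\}$; you instead re-derive this localization from scratch by a Stone--Weierstrass argument showing that the signed measure $\alpha(a)\,d\nu_L$ is positive. Your route is self-contained but reproves a piece of machinery the paper already has on the shelf. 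For the translation of $C_{C_a-a}\le C_a$ into $\alpha(a)\ge 0$, the paper works on all of $K_L$ and only needs the defining inequality $|\alpha(C_a-a)|\le C_{C_a-a}$ built into $K_L$, which makes that direction a one-liner; you work on $\operatorname{supp}(\nu_L)$ and first establish the sharper identity $C_b=\max_{\alpha\in\operatorname{supp}(\nu_L)}|\alpha(b)|$ via monotone convergence of $L^{2n}$-norms to the essential supremum. That identity is a clean and slightly stronger fact than the paper uses (the paper only needs the inequality $L((C_a-a)^{2n})\le C_a^{2n}$ in one direction and the membership $\operatorname{supp}(\nu_L)\subseteq K_L$ in the other), and it could be reused elsewhere, e.g., in the proposition computing $(C_a)^2=C_{a^2}=R_a$; the cost is a longer argument for what the paper dispatches in a few lines.
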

\begin{proof}Let $$K_L=\{\alpha \in X(A)||\alpha(a)|\le C_a, a\in A\}.$$
If $a\in Q_L$ then $0\le\alpha(a)\le C_a$,$\alpha\in K_L$, and we have 
$$L(( C_a-a)^{2n})=\int_ {K_L}(C_a-\alpha(a) )^{2n} d\nu({\alpha})\le C_a^ {2n}$$
which yields
$$C_{C_a-a}\le C_a.$$
Now suppose that $C_{C_a-a}\le C_a$. Hence, if $\alpha\in K_L$, then
$$|\alpha(C_a-a)|=|C_a-\alpha(a)|=C_a-\alpha(a)\le C_{C_a-a}\le C_a$$
that is $\alpha(a)\ge 0$ for all  $\alpha\in K_L$ and consequently $a\in Q_L$.
\end{proof}
Note that the implication $a\in Q_L$ implies $C_{C_a-a}\le C_a$ is proved in a diferrent way in \cite{MSTP},Lemma 3.4.

\subsection{A Bochner-Godement type theorem}
\begin{lemma}\label{minC_a}Let $A$ be an unital commutative $\mathbb R$-algebra.
Let $L:A\to\mathbb R $ be a  positive semidefinite functional on $A$ with L(1)=1.
Then $C_a<\infty$ for all $a\in A$ if and only if the set $\{M\in [0,\infty)| M-a^ 2\in Q_L\}$ 
is  non empty  for all $a\in A$ and we have
$$C_{a^ 2}=(C_a)^ 2=\inf \{M\in [0,\infty)| M-a^ 2\in Q_L\}.$$
\end{lemma}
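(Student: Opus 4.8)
The plan is to unwind the definition of $Q_L$ and recognize that the set $\{M\in[0,\infty):M-a^2\in Q_L\}$ is governed by the same ratio quantity $R_a:=\sup_{b\in A,\,L(b^2)\ne 0}\frac{L(a^2b^2)}{L(b^2)}$ that appears in the proposition established earlier, which already supplies $(C_a)^2=C_{a^2}=R_a$. By definition, $M-a^2\in Q_L$ means $L(b^2(M-a^2))\ge 0$, i.e. $M\,L(b^2)\ge L(a^2b^2)$ for every $b\in A$. First I would observe that for $b$ with $L(b^2)>0$ this is exactly $M\ge \frac{L(a^2b^2)}{L(b^2)}$, whereas for $b$ with $L(b^2)=0$ the inequality holds automatically: since $L$ is positive semidefinite, the Cauchy--Schwarz inequality $L(fg)^2\le L(f^2)L(g^2)$ with $f=a^2b$ and $g=b$ forces $L(a^2b^2)^2\le L(a^4b^2)L(b^2)=0$. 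Hence $M-a^2\in Q_L$ is equivalent to $M\ge R_a$, so that $\{M\in[0,\infty):M-a^2\in Q_L\}$ equals $[R_a,\infty)$ when $R_a<\infty$ and is empty otherwise (note $R_a\ge L(a^2)\ge 0$, taking $b=1$). In particular the set is non-empty precisely when $R_a<\infty$, and then its infimum is $R_a$.

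With this identification the lemma reduces to the equivalence ``$C_a<\infty$ for all $a$'' $\iff$ ``$R_a<\infty$ for all $a$''. For the implication that non-emptiness forces $C_a<\infty$ I would argue elementarily, bypassing the earlier machinery: if $M-a^2\in Q_L$, then testing against $b=a^{n-1}$ gives $M\,L(a^{2n-2})\ge L(a^{2n})$, and iterating down to $L(1)=1$ yields $L(a^{2n})\le M^n$, whence $\sqrt[2n]{L(a^{2n})}\le\sqrt{M}$ and $C_a\le\sqrt{M}<\infty$. For the converse, once $C_a<\infty$ for all $a$ the earlier proposition applies and gives $R_a=(C_a)^2=C_{a^2}<\infty$; the set is then non-empty and its infimum equals $R_a=(C_a)^2=C_{a^2}$, which is exactly the claimed chain of equalities. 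Combining the two directions closes the ``if and only if''.

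The step I expect to require the most care is the treatment of the degenerate case $L(b^2)=0$ in the first paragraph: without handling it the equivalence $M-a^2\in Q_L\iff M\ge R_a$ would not be literally correct, since $Q_L$ is defined by the inequalities $M\,L(b^2)\ge L(a^2b^2)$ for \emph{all} $b$, including those with $L(b^2)=0$. The Cauchy--Schwarz argument shows these extra constraints are vacuous, but it must be stated explicitly. The only other point to watch is the logical bookkeeping: the equalities in the statement are invoked only in the regime $C_a<\infty$, where the earlier proposition is valid, while the elementary iteration $L(a^{2n})\le M^n$ is what carries the reverse implication on its own.
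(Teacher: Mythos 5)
Your proof is correct and follows essentially the same route as the paper's: one direction is the identical iteration $L(a^{2n})\le M\,L(a^{2n-2})\le\dots\le M^n$ giving $C_a\le\sqrt{M}$, and the other rests on the same inequality $L(a^2b^2)\le C_{a^2}L(b^2)$ (i.e.\ $C_{a^2}-a^2\in Q_L$), which you reach via the earlier proposition $(C_a)^2=C_{a^2}=R_a$ while the paper cites the argument of Theorem \ref{PreMR}. Your explicit Cauchy--Schwarz treatment of the degenerate case $L(b^2)=0$ is a detail the paper leaves implicit, but it does not change the substance of the argument.
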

\begin{proof}We get as in the proof of Theorem \ref{PreMR} that  $C_{a^ 2}-a^ 2\in Q_L$. This yields $C_{a^ 2}\ge \inf \{M\in [0,\infty)| M-a^ 2\in Q_L\}.$

Now if $M-a^ 2\in Q_L$ then   
$$L(a^ {2n})\le ML(a^ {2(n-1)})\le \dots \le M^ nL(1)=M^ n,$$
 for all $n\in \mathbb N$, which yields $C_a\le \sqrt M$ and finishes the proof.

\end{proof}

The next theorem is a consequence of Theorem \ref{MR} and the previous lemma and is a slight modification of  \cite{DA2},Theorem 1.4.

Note that this shows that \cite{DA2},Theorem 1.4 and Theorem \ref{MR} are equivalent.
\begin{theorem}\label{FA}Let $A$ be an unital commutative $\mathbb R$-algebra and $Q\subseteq A$ with $1\in Q$. Let $(M_a)_{a\in A}$ a family of positive real numbers and 
$$K=\{\alpha \in X(A)||\alpha(a)|\le M_a, a\in A\,\, \text{and}\,\,\alpha (q)\ge 0,q\in Q\}$$
For a linear function $ L:A\to \mathbb R$ the following conditions are equivalent:
\begin{enumerate}
\item[(i)]
$M_a^ 2-a^ 2\in Q_L$ for every $a\in A$ and $Q\subseteq Q_L$;
\item[(ii)]There is a positive Radon measure $\mu$ on $K$ such that

$$L(a)=\int_{K}\alpha (a)d\mu(\alpha),a\in A$$
\end{enumerate}
\end{theorem}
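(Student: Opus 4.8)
The plan is to prove the equivalence (i)$\iff$(ii), with essentially all of the substance concentrated in the implication (i)$\Rightarrow$(ii); the reverse implication is a direct computation with the integral representation. The guiding idea for (i)$\Rightarrow$(ii) is that the membership $M_a^2-a^2\in Q_L$ is merely a reformulation of the bound $C_a\le M_a$, after which Theorem \ref{MR} produces the measure and its support estimate forces the measure to live on $K$. (Throughout I take $L(1)=1$, as this is what makes $Q_L$ meaningful and what is needed to invoke Theorem \ref{MR} and Lemma \ref{minC_a}.)

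For (i)$\Rightarrow$(ii) I would first observe that, since $1\in Q\subseteq Q_L$, we have $1\in Q_L$, i.e. $L(b^2)\ge 0$ for all $b\in A$, so $L$ is positive semidefinite; together with $L(1)=1$ this places us exactly in the hypotheses of Lemma \ref{minC_a}. Because each $M_a$ is a positive real and $M_a^2-a^2\in Q_L$, the set $\{M\in[0,\infty):M-a^2\in Q_L\}$ is non-empty (it contains $M_a^2$), so Lemma \ref{minC_a} yields $C_a<\infty$ for every $a$ together with
$$(C_a)^2=C_{a^2}=\inf\{M\in[0,\infty):M-a^2\in Q_L\}\le M_a^2,$$
whence $C_a\le M_a$. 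Now every hypothesis of Theorem \ref{MR} is in force ($L(A^2)\subseteq[0,\infty)$, $L(1)=1$, $C_a<\infty$ for all $a$, and $Q\subseteq Q_L$), so there is a representing measure $\nu_L$ whose support lies in $\{\alpha\in X(A):|\alpha(a)|\le C_a\text{ for all }a,\ \alpha(q)\ge 0\text{ for }q\in Q\}$. Since $C_a\le M_a$, this set is contained in $K$, and therefore $\mu=\nu_L$ is a positive Radon measure on $K$ representing $L$, which is (ii).

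For (ii)$\Rightarrow$(i) I would simply substitute the representation into the defining inequalities of $Q_L$. For $q\in Q$ and any $b\in A$, the positivity of $\alpha(b)^2$ and of $\alpha(q)$ on $K$ gives $L(b^2q)=\int_K\alpha(b)^2\alpha(q)\,d\mu(\alpha)\ge 0$, so $q\in Q_L$ and hence $Q\subseteq Q_L$. Likewise, for $a\in A$ the bound $|\alpha(a)|\le M_a$ on $K$ makes $M_a^2-\alpha(a)^2\ge 0$, so $L(b^2(M_a^2-a^2))=\int_K\alpha(b)^2(M_a^2-\alpha(a)^2)\,d\mu(\alpha)\ge 0$ for all $b$, i.e. $M_a^2-a^2\in Q_L$. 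This gives (i).

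The only genuine obstacle is the passage $M_a^2-a^2\in Q_L\Rightarrow C_a\le M_a$; everything else is formal. This step is precisely the content of Lemma \ref{minC_a}, which identifies $(C_a)^2$ with the infimum of those $M$ for which $M-a^2\in Q_L$, so the difficulty has already been isolated and resolved there. Once that identification is available, Theorem \ref{MR} closes the argument, which is exactly why the result is announced as a consequence of these two.
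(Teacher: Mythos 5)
Your proof is correct and takes essentially the same route as the paper, which presents this theorem precisely as a consequence of Lemma \ref{minC_a} (used, as you do, to convert $M_a^2-a^2\in Q_L$ into $C_a\le M_a$) and Theorem \ref{MR}; the converse by substitution into the integral representation is the same routine step the paper leaves implicit.
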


\subsection{Functional positive on an archimedean quadratic module}

Next, we give a short proof of a classical result. See, for example, \cite {SCH4}, Corollary 12.47.
\begin{prop}Let $A$ be an unital commutative $\mathbb R$-algebra, and $L(1)=1$. If $L(Q)\subseteq [0,\infty)$ for some archimedean quadratic module $Q$ in $A$, then there exists a unique representing Radon measure $\nu$ for $L$ with support contained in
$$\{\alpha\in X(A):\alpha(a)\ge 0\,\,\text{for all} \,\,a\in Q\}$$
\end{prop}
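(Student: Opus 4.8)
The plan is to deduce everything from Theorem \ref{MR}, whose hypotheses I only need to verify for the quadratic module $Q$ itself. Concretely, I would show that $L$ is positive semidefinite, that $C_a=\sup_{n\in\mathbb N}\sqrt[2n]{L(a^{2n})}<\infty$ for every $a\in A$, and that $Q\subseteq Q_L$; granting these three facts, Theorem \ref{MR} immediately yields a unique representing measure $\nu$ with compact support contained in $\{\alpha\in X(A):\alpha(q)\ge 0,\ q\in Q\}$, which is exactly the asserted conclusion.

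Two of the three facts are essentially formal. Since $Q$ is a quadratic module we have $1\in Q$ and $A^2Q\subseteq Q$, hence $A^2=A^2\cdot 1\subseteq Q$; therefore $L(A^2)\subseteq L(Q)\subseteq[0,\infty)$, so $L$ is positive semidefinite. For the inclusion $Q\subseteq Q_L$, I would take $q\in Q$ and $b\in A$: then $b^2q\in A^2Q\subseteq Q$, so $L(b^2q)\ge 0$, and since $b$ was arbitrary this means $q\in Q_L$ by the definition of $Q_L$.

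The one step that uses the archimedean hypothesis, and which I regard as the crux, is the finiteness of the $C_a$. Fix $a\in A$ and apply the archimedean property to the element $a^2$: there is $M>0$ with $M-a^2\in Q$. By the inclusion just proved, $M-a^2\in Q_L$, and then the argument in the proof of Lemma \ref{minC_a} gives $L(a^{2n})\le M^{n}$ for all $n$, whence $C_a\le\sqrt{M}<\infty$. As this holds for every $a$, all the numbers $C_a$ are finite. With positive semidefiniteness, $L(1)=1$, finiteness of every $C_a$, and $Q\subseteq Q_L$ all in hand, Theorem \ref{MR} applies and finishes the proof. The main obstacle is thus not any hard estimate but the two observations $A^2\subseteq Q$ and the conversion of the archimedean bound $M-a^2\in Q$ into the scalar bound $C_a\le\sqrt{M}$ via Lemma \ref{minC_a}; everything else is a direct citation.
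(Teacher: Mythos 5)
Your proposal is correct and follows essentially the same route as the paper: the paper likewise applies the archimedean property to get $M_a^2-a^2\in Q$, invokes the argument of Lemma \ref{minC_a} to conclude $C_a\le M_a$, and then cites Theorem \ref{MR}. You merely make explicit the two formal verifications ($A^2\subseteq Q$ giving positive semidefiniteness, and $Q\subseteq Q_L$) that the paper leaves implicit.
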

\begin{proof}The fact that $Q$ is archimedean yields that
for every $a\in A$ there exists a positive number $M_a$ such that $M_a^2-a ^2\in Q$. Now we obtain as in the proof of Lemma \ref{minC_a}, that
 $C_a\le M_a$ and our result is a consequence of Theorem \ref{MR}.
\end{proof}

\subsection{Schm\text{\"u}dgen Theorem for compacts semi-algebraic sets}
\begin{theorem}
Let $\{R_1,\dots,R_m\}$ be a finite set of polynomials from 

$\mathbb R[X_1,\dots,X_d]$.  Suppose that the set 
$$K=\{x\in \mathbb R^d:R_j(x)\ge 0\quad\text{for}\quad j=1,\dots,m\}$$
is compact. Then a functional $L:\mathbb R[X_1,\dots,X_d]\to \mathbb R$ is a $K-$ moment functional if and only if $L$ and $P\to L(R_{j_1}\dots R_{j_k}P)$ are positive semidefinite for all possible choices $j_1,\dots,j_k$ of pairwise different numbers from $\{1,\dots,m\}$.
\end{theorem}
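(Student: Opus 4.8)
The plan is to derive both directions from Theorem \ref{MR}, working in $A=\mathbb{R}[X_1,\dots,X_d]$ where $X(A)$ is identified with $\mathbb{R}^d$ through the point evaluations $\alpha_x(P)=P(x)$; under this identification the closed set $\{\alpha:\alpha(R_j)\ge 0,\ j=1,\dots,m\}$ is precisely $K$. The \emph{only if} direction I would dispatch first: if $L(P)=\int_K P\,d\mu$, then for distinct indices $j_1,\dots,j_k$ and any $P$ the polynomial $R_{j_1}\cdots R_{j_k}P^2$ is nonnegative on $K$, so $L(R_{j_1}\cdots R_{j_k}P^2)\ge 0$; the empty product gives that $L$ is positive semidefinite.

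For the converse I would first normalise to $L(1)=1$ (if $L(1)=0$ then Cauchy--Schwarz for $(a,b)\mapsto L(ab)$ forces $L\equiv 0$, represented by the zero measure). Let $T$ denote the preordering generated by $R_1,\dots,R_m$, i.e.\ the sums $\sum_{e\subseteq\{1,\dots,m\}}\sigma_e\prod_{j\in e}R_j$ with each $\sigma_e$ a sum of squares. The first key step is to check $T\subseteq Q_L$: for a generator $q=\sigma\prod_{j\in e}R_j$ and any $b$, the element $b^2\sigma$ is a sum of squares $\sum_i c_i^2$, whence $L(b^2q)=\sum_i L(\prod_{j\in e}R_j\,c_i^2)\ge 0$ by the positivity hypothesis; closure of $Q_L$ under addition then gives $T\subseteq Q_L$. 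Since every element of $T$ is nonnegative on $K$ while $R_j\in T$, the set $\{\alpha:\alpha(q)\ge 0,\ q\in T\}$ equals $K$, so this will be the right choice of $Q$ for Theorem \ref{MR}.

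The second, and decisive, step is to show $C_a<\infty$ for all $a$. Here I would invoke the ingredient from real algebraic geometry in \cite{SCH1}: since $K$ is compact, $N-\sum_{i=1}^d X_i^2>0$ on $K$, and the Positivstellensatz yields $N-\sum_i X_i^2\in T\subseteq Q_L$ for some $N>0$. As $Q_L$ is a quadratic module, $N-X_i^2=(N-\sum_j X_j^2)+\sum_{j\ne i}X_j^2\in Q_L$ for each $i$, and from $c-b^2\in Q_L$ the identity $\tfrac{c+1}{2}\pm b=\tfrac12((b\pm 1)^2+(c-b^2))$ shows that each such $b$ is bounded; the usual argument (as in the use of \cite{SCH4}, Lemmas 2.8 and 2.9) that the bounded elements form a subalgebra, together with boundedness of $1$ and of each $X_i$, makes $Q_L$ archimedean. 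In particular $a^2$ is bounded, so $\{M\in[0,\infty):M-a^2\in Q_L\}\ne\emptyset$, and Lemma \ref{minC_a} gives $C_a<\infty$. With $C_a<\infty$ and $T\subseteq Q_L$ in hand, Theorem \ref{MR} (with $Q=T$) produces the unique representing measure supported in $K$, i.e.\ $L$ is a $K$-moment functional.

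I expect the genuine obstacle to be solely the algebraic input $N-\sum_i X_i^2\in T$: everything analytic is already encoded in Theorem \ref{MR} and in the bookkeeping for $C_a$, but the step from compactness of $K$ to this archimedean membership in the preordering is the hard, non-elementary fact that must be borrowed from \cite{SCH1}.
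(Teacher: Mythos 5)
Your overall architecture (reduce to Theorem \ref{MR} with $Q=T$ the preordering generated by the $R_j$, verify $T\subseteq Q_L$, and obtain $C_a<\infty$ from archimedeanity of $Q_L$ via Lemma \ref{minC_a}) is sound, and you have correctly isolated where all the difficulty sits. But the one step you propose to ``borrow'', namely that compactness of $K$ gives $N-\sum_i X_i^2\in T$, is precisely the step that cannot be borrowed. The classical Positivstellensatz (\cite{BoCoR}, Corollary 4.4.3), which is the only ingredient of real algebraic geometry actually used in \cite{SCH1}, gives strictly less: it produces $G,H\in T$ with $\bigl(N-\sum_i X_i^2\bigr)G=1+H$, not the membership $N-\sum_i X_i^2\in T$. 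The statement ``$p>0$ on compact $K$ implies $p\in T$'' is Schm\"udgen's Positivstellensatz itself, which in \cite{SCH1} is \emph{deduced from} the moment-problem theorem you are trying to prove; citing it here is circular. (It does admit an independent algebraic proof, due to W\"ormann, but that is a substantial theorem in its own right, and invoking it would defeat the point of this proof, which is to derive Schm\"udgen's theorem from the weaker Stengle identity plus moment-theoretic bookkeeping.)

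The paper bridges exactly this gap with a bootstrapping argument that is missing from your write-up. From $\bigl(\rho^2-\sum_i X_i^2\bigr)G=1+H$ with $G,H\in T$, the hypotheses make both $P\mapsto L(GP)$ and $P\mapsto L\bigl((\rho^2-\sum_i X_i^2)GP\bigr)=L(P)+L(HP)$ positive semidefinite, so Proposition \ref{Msf} applies to $P\mapsto L(GP)$ and shows that $M:P\mapsto L(P)+L(HP)$ is a moment functional on the ball $B_\rho$. Since $Q:P\mapsto L(HP)$ is positive semidefinite, $L\bigl((\sum_i X_i^2)^{2n}\bigr)\le M\bigl((\sum_i X_i^2)^{2n}\bigr)$, and the right-hand side grows at most like $\rho^{4n}$, which yields $C_{\sum_i X_i^2}\le\rho^2$ for $L$ itself; a second application of Proposition \ref{Msf} (or of Theorem \ref{MR}) then represents $L$, and $T\subseteq Q_L$ pushes the support into $K$. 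If you replace your appeal to $N-\sum_i X_i^2\in T$ by this argument, the rest of your proposal goes through.
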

\begin{proof}
Because $K$ is compact there is a $\rho>0$ such that 
$K\subset B_\rho$ where $B_\rho=\left\{(x_1,\dots,x_d)\in \mathbb R^d:x_1^ 2+\dots+x_d^ 2\le \rho^2\right\}$ and $\rho^2-x_1^ 2+\dots-x_d^ 2>0$ for every $(x_1,\dots,x_d)\in K$.

According to \cite{BoCoR},Corollary 4.4.3,(ii)  there are polynomials $G$ and $H$ in the cone generated by the polynomials $P^2$ and the $P^2R_{j_1}\dots R_{j_k}$ for all possible choices $j_1,\dots,j_k$ of pairwise different numbers from $\{1,\dots,m\}$, such that
$$(\rho^2-X_1^2-\dots-X_d^2)G=1+H$$

Because $P\mapsto L(GP)$ and $P\mapsto L((\rho^2-X_1^2-\dots-X_d^2)GP)$ are positive semidefinite, it results from Proposition \ref{Msf}, that
there is a positive Radon measure $\nu$ on 
$$B_\rho=\left\{(x_1,\dots,x_d)\in \mathbb R^d:x_1^ 2+\dots+x_d^ 2\le \rho^2\right\}$$
such that
$$L(GX^\alpha)=\int_{B_\rho}x^\alpha d\nu(x),\alpha\in \mathbb N^d.$$
Consequently
$$L((\rho^2-X_1^2-\dots-X_m^2)GX^\alpha)=\int_{B_\rho}(\rho^2-x_1^2-\dots-x_d^2)x^\alpha d\nu(x),\alpha\in \mathbb N^d.$$
Hence  the function $P\mapsto L((\rho^2-X_1^2-\dots-X_m^2)GP)$ is a moment functional $P\mapsto M(P)$ on $B_\rho$.
We have $M=L+Q$ where $Q$ is the functional $P\mapsto L(HP).$ 
This yields
$$\sup_{n\in \mathbb N}\sqrt[2n] {L((X_1^ {2}+\dots+X_m^ {2})^ {2n})}\le
\sup_{n\in \mathbb N}\sqrt[2n] {M((X_1^ {2}+\dots+X_m^ {2})^ {2n})}\le\rho^2
$$
because $Q$ is a positive semidefinite functional and $M $  is a moment functional on $B_\rho.$
Consequently applying once again Proposition \ref{Msf} the function $L $  is a moment functional on $B_\rho.$

Because

$$K=\{x\in \mathbb R^d:R_j(x)\ge 0\quad\text{for}\quad j=1,\dots,m\}\subseteq B_\rho$$
it results from Theorem \ref{MR} that the support of the representing measure for $L$ is a subset of $K$.
\end{proof}
\begin{remark}A similar proof is sketched in \cite{DA3},Remark 2.4.
\end{remark}
\section{Intrinsic characterizations of the moment functions on an unital commutative semigroup with involution}
An unital semigroup with involution is a semigroup $(S,\cdot)$ with neutral element $1$ and a function $^*:S\to S$ satisfying
\begin{enumerate}
\item $(st)^* =t^*s^*,s,t\in S$;
\item $(s^*)^*=s, s\in S$.

\end{enumerate}

In this section, we consider unital commutative semigroups with involution.

\begin{df}Let $S$ be a semigroup. A function  $f:S\to \mathbb C$ is called \it{positive semidefinite} if 
$$\sum_{j,k=1}^nc_j\bar c_kf(s_j^*s_k)\ge 0$$
for all $n\in \mathbb N$ and every choice of $s_1,\dots,s_n\in S$ and $c_1,\dots c_n \in \mathbb C$.
\end{df}
\begin{df}Let $S$ be a semigroup. A function  $v:S\to [0,\infty)$ is called an \it{absolute value} if 
\begin{enumerate}
\item $v(1)=1$;
\item $v(st) \le v(s)v(t),s,t\in S$;
\item $v(s^*)=v(s), s\in S$.

\end{enumerate}
\end{df}

\begin{df}Let $S$ be a semigroup. A function  $\alpha :S\to \mathbb C$ is called a \it{character} if 
\begin{enumerate}
\item $\alpha(1)=1$;
\item $\alpha (st)=\alpha(s)\alpha (t),s,t\in S$;
\item $\alpha(s^*)=\overline{\alpha(s)}, s\in S$.

\end{enumerate}
\end{df}

The set of characters is denoted by $X(S)$, and we equip it with the topology of pointwise convergence.

The definitions for moment function and representing measure are the same as in the case of real algebra. 
\begin{theorem}\label{MR2}
Let $ f:S\to \mathbb C$ be a positive semidefinite function. Then, there exists a unique representing measure  $\nu_f$ for $f$ ,with compact support ,
 if and only if 
$$R_s=\sup_{n\in \mathbb N}\sqrt[2n] {f(s^n(s^*) ^{n})}<\infty,s\in S$$

Moreover, in this case, the support of the measure $\nu_f$ is a subset of the set

$$K_f=\{\alpha \in X(S):|\alpha(s)|\le R_s\text { for all } s\in S\}.$$

\end{theorem}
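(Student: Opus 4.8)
The plan is to reduce the statement to the real-algebra result Theorem~\ref{MR}. The easy direction is the necessity of $R_s<\infty$: if $f(s)=\int_K\alpha(s)\,d\nu(\alpha)$ with $\nu$ a positive measure on a compact $K\subseteq X(S)$, then, using $\alpha(s^n(s^*)^n)=|\alpha(s)|^{2n}$, one gets $f(s^n(s^*)^n)=\int_K|\alpha(s)|^{2n}\,d\nu\le\bigl(\sup_{\alpha\in K}|\alpha(s)|\bigr)^{2n}\nu(K)$; taking $2n$-th roots and letting $n\to\infty$ yields $R_s\le\sup_{\alpha\in K}|\alpha(s)|<\infty$, since $|\alpha(s)|$ is continuous on the compact $K$. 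This also already locates the support in $K_f$.

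For the converse I would first manufacture a real algebra from $S$. Let $\mathbb{C}[S]$ be the complex semigroup $*$-algebra, with $(\sum c_s s)^*=\sum\bar c_s s^*$, and let $A$ be its set of Hermitian elements $a=a^*$; since $S$ is commutative, $A$ is a unital commutative real algebra, spanned over $\mathbb{R}$ by the elements $u_s:=s+s^*$ and $v_s:=i(s-s^*)$. Extend $f$ to the complex-linear functional $L(\sum c_s s)=\sum c_s f(s)$ on $\mathbb{C}[S]$; positive semidefiniteness of $f$ is exactly $L(b^*b)\ge 0$ for all $b$ (and forces $f(s^*)=\overline{f(s)}$), so on $A$ the functional $L$ is real-valued with $L(A^2)\subseteq[0,\infty)$, and after normalising by $f(1)$ (the case $f(1)=0$ being trivial) we may assume $L(1)=1$. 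The key computation is that for $w:=s^*s\in A$ one has $f(s^n(s^*)^n)=L(w^n)$, so $R_s^2=\sup_n L(w^n)^{1/n}$; a Cauchy--Schwarz (log-convexity) argument, $L(w^m)^2\le L(w^{m-1})L(w^{m+1})$, shows that this supremum is already attained along even exponents and hence equals $C_w$. Thus $R_s<\infty$ is equivalent to $C_{s^*s}<\infty$ for every $s$.

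Next I would upgrade this to $C_a<\infty$ for \emph{all} $a\in A$. Since $a\mapsto C_a$ is a seminorm (\cite{MSTP}, Lemma~3.3), by subadditivity it suffices to bound $C_{u_s}$ and $C_{v_s}$, and here the identity
$$u_s^2+v_s^2=(s+s^*)^2+\bigl(i(s-s^*)\bigr)^2=4s^*s=4w$$
does the work: expanding $(u_s^2+v_s^2)^n$ and using that each term $\binom{n}{k}(u_s^k v_s^{n-k})^2$ has nonnegative $L$-value gives $L(u_s^{2n})\le 4^n L(w^n)$, whence $C_{u_s}\le 2R_s<\infty$, and likewise $C_{v_s}\le 2R_s$. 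With $C_a<\infty$ for every $a$, Theorem~\ref{MR} (applied with the trivial set $Q$) produces a unique representing measure $\nu_L$ for $L$ on $X(A)$, supported in $\{\beta\in X(A):|\beta(a)|\le C_a\}$.

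Finally I would transport this measure back to $X(S)$. Each real character $\beta\in X(A)$ determines a map $\alpha_\beta(s):=\tfrac12\beta(u_s)-\tfrac{i}{2}\beta(v_s)$, and conversely each $\alpha\in X(S)$ restricts to a real homomorphism $a=\sum c_s s\mapsto\sum c_s\alpha(s)$ on $A$; I expect these to be mutually inverse homeomorphisms. This correspondence is the technical heart of the argument: one must check from the semigroup multiplication rules (for instance $u_su_t=u_{st}+u_{st^*}$, and its analogues mixing $u$ and $v$) that $\alpha_\beta$ is genuinely multiplicative, unital, and $*$-compatible. Granting this, pushing $\nu_L$ forward along $\beta\mapsto\alpha_\beta$ gives a measure $\mu$ on $X(S)$ with $\int\alpha(s)\,d\mu=\tfrac12 L(u_s)-\tfrac{i}{2}L(v_s)=f(s)$; since $|\alpha_\beta(s)|^2=\beta(s^*s)\le C_w=R_s^2$, its support lies in $K_f$, and uniqueness is inherited from Theorem~\ref{MR} through the bijection. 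The main obstacle is precisely establishing this correspondence between $X(A)$ and $X(S)$ and verifying that $\alpha_\beta$ is a character.
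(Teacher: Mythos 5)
Your proof is essentially correct, but it follows a genuinely different route from the paper's. The paper never leaves the semigroup setting: it uses the Cauchy--Bunyakovsky--Schwarz inequality $\bigl(f(s^n(s^*)^nt^m(t^*)^m)\bigr)^2\le f(s^{2n}(s^*)^{2n})f(t^{2m}(t^*)^{2m})$ to show directly that $s\mapsto R_s$ is an absolute value on $S$ with $|f(s)|\le R_s$, and then invokes the Berg--Christensen--Ressel representation theorem (\cite{BCR}, Theorem 2.5) for $v$-bounded positive semidefinite functions on $*$-semigroups; the conclusion is immediate. You instead pass to the Hermitian part $A$ of the semigroup $*$-algebra $\mathbb{C}[S]$, bootstrap $R_s<\infty$ to $C_a<\infty$ for all $a\in A$ via the identity $u_s^2+v_s^2=4s^*s$ together with subadditivity of $a\mapsto C_a$, apply Theorem \ref{MR}, and transport the measure through the correspondence $X(A)\leftrightarrow X(S)$. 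Your computations check out (including the log-convexity step identifying $R_s^2$ with $C_{s^*s}$ and the binomial bound $L(u_s^{2n})\le 4^nL(w^n)$), and the correspondence you flag as ``the main obstacle'' is in fact routine: a real character $\beta$ of $A$ complexifies to a $\mathbb{C}$-algebra homomorphism of $\mathbb{C}[S]=A\oplus iA$, whose restriction to $S$ is automatically multiplicative and $*$-compatible since $u_{s^*}=u_s$ and $v_{s^*}=-v_s$; so this is a deferred verification rather than a gap. The trade-off is that your argument is self-contained relative to this paper (it leans on Theorem \ref{MR} rather than on the external semigroup theorem of \cite{BCR}), at the cost of the algebra/character bookkeeping, whereas the paper's proof is shorter but outsources the representation step entirely.
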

\begin {proof}Suppose $R_s\in \mathbb R$ for every $s\in S$.
Because from  Cauchy-Buniakovski-Schwarz inequality (see \cite{BCR}, 4.1.6) we get

$(f(s^n(s^*) ^{n}t^m(t^*) ^{m}))^ 2\le f(s^{2n}(s^*) ^{2n}) f(t^{2m}(t^*) ^{2m}), s,t\in S, m,n\in \mathbb N$,

the function $s\mapsto \sup_{n\in \mathbb N}\sqrt[2n] {f(s^ {n})s^ {*n}}=R_s$ is an absolute value on $S$ such that $|f(s)|\le R_s,s\in S$. Consequently, it results from \cite{BCR}, Theorem 2.5 that there exists a unique representing measure  $\nu_f$ for $f$ on $K_f$.

Now, the proof can be easily completed.
\end{proof}
Next, we give an application of the previous theorem.
Let $(\mathbb N_0^2,\cdot,*) $ be the semigroup  such that $(m,n)\cdot(p,q)=(m+p,n+q)$ and $(m,n)^*=(n,m)$. Note that $(m,n)\cdot(0,0)=(m,n)$.
 
It is easy to see that $X(\mathbb N_0^2)$ is $\mathbb C$ via the function $z\mapsto ( (m,n)\mapsto z^n \bar z ^m)$.

The following theorem is an immediate consequence of the previous theorem and relaxes the requirements from \cite {ATZ}, Theorem 2.1 and \cite {DA2}, Theorem 1.4.
\begin{theorem}
Let  $f:\mathbb N_0^2\to \mathbb C$ and $r>0$. The following conditions are equivalent 
\begin{enumerate}
\item The function $f$  is positive semidefinite and  there exists a number $C>0$ such that $f(n,n)\le Cr^{2n}, n\in \mathbb N$ ;
\item there is a Radon measure $\mu $ on $K=\{z\in\mathbb C;|z|^2\le r^2\}$ such that
$$f(m,n)=\int_Kz^n \bar z ^md\mu(z),(m,n)\in \mathbb N_0^2.$$
\end{enumerate}
\end{theorem}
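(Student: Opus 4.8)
The plan is to obtain both implications from the preceding theorem (Theorem~\ref{MR2}), specialized to $S=\mathbb{N}^2$, after transporting its conclusion through the identification of $X(\mathbb{N}^2)$ with $\mathbb{C}$ given by $z\mapsto\big((m,n)\mapsto z^n\bar z^m\big)$. Writing $\alpha_z$ for the character attached to $z$, one has $|\alpha_z(p,q)|=|z|^{p+q}$, so the disc $K=\{z:|z|^2\le r^2\}$ and the set $K_f$ produced by the preceding theorem are directly comparable.

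I would first dispatch the easy direction $(2)\Rightarrow(1)$. If $f(m,n)=\int_K z^n\bar z^m\,d\mu(z)$, then positive semidefiniteness is the usual computation $\sum_{j,k}c_j\bar c_k f(s_j^*s_k)=\int_K|\sum_k\bar c_k\alpha_z(s_k)|^2\,d\mu(z)\ge 0$, which uses only $\alpha_z(s^*)=\overline{\alpha_z(s)}$; and the growth bound follows from $f(n,n)=\int_K|z|^{2n}\,d\mu(z)\le r^{2n}\mu(K)$, so one may take $C=\mu(K)$.

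For $(1)\Rightarrow(2)$ I would first check the hypothesis $R_s<\infty$ of the preceding theorem. For $s=(p,q)$, using $(p,q)^*=(q,p)$, one computes $s^n(s^*)^n=(n(p+q),n(p+q))$, so that $f(s^n(s^*)^n)=f(n(p+q),n(p+q))$ is a diagonal value; the bound $f(k,k)\le Cr^{2k}$ then yields $R_s=\sup_n f(n(p+q),n(p+q))^{1/2n}\le r^{p+q}\sup_n C^{1/2n}<\infty$. Hence the preceding theorem furnishes a unique representing measure $\nu_f$ with compact support contained in $K_f$.

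The one delicate point, and the main obstacle, is that $K_f$ only bounds the support by $|z|\le R_{(1,0)}=\sup_n f(n,n)^{1/2n}$, a radius that can exceed $r$ (for instance when $C>1$). To trade this crude radius for the sharp radius $r$, I would argue directly from $f(n,n)=\int|z|^{2n}\,d\nu_f(z)$ together with $f(n,n)\le Cr^{2n}$: if $\nu_f$ charged the set $\{|z|\ge r'\}$ with mass $\delta>0$ for some $r'>r$, then $f(n,n)\ge (r')^{2n}\delta$, forcing $(r'/r)^{2n}\le C/\delta$ for every $n$, which is impossible since $r'/r>1$. Therefore $\operatorname{supp}(\nu_f)\subseteq K$, so that $\nu_f$ is itself the measure required in (2). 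This growth-versus-support comparison is exactly the device already used in the paper (compare the proof of the Schm\"udgen theorem).
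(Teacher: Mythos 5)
Your proposal is correct, and it follows the route the paper intends: the paper offers no written proof, declaring the theorem ``an immediate consequence of the previous theorem'' (the semigroup characterization via $R_s=\sup_n\sqrt[2n]{f(s^n(s^*)^n)}$). Your verification that $R_{(p,q)}<\infty$ from the diagonal bound, via $s^n(s^*)^n=(n(p+q),n(p+q))$, is exactly the intended application (note you correctly read the involution as $(m,n)^*=(n,m)$, which is what the displayed characters $z\mapsto z^n\bar z^m$ require; the paper's line ``$(m,n)^*=(m,n)$'' is a typo). Where you go beyond the paper is in the ``delicate point'' you flag, and rightly so: the previous theorem only places $\operatorname{supp}(\nu_f)$ inside $K_f$, i.e.\ inside $|z|\le R_{(1,0)}=\sup_n f(n,n)^{1/2n}$, and this radius genuinely can exceed $r$ when $C>1$ (take $\mu=C\delta_{z_0}$ with $|z_0|=r$: then $f(1,1)^{1/2}=\sqrt{C}\,r$). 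Your growth-versus-support argument --- if $\nu_f(\{|z|\ge r'\})=\delta>0$ for some $r'>r$ then $(r'/r)^{2n}\le C/\delta$ for all $n$, a contradiction --- closes this gap cleanly; an alternative would be to normalize by $f(0,0)$ and use that for a normalized positive semidefinite function the supremum defining $R_{(1,0)}$ coincides with the limit, which is $\le r$. Either way, your write-up supplies a step that the paper's ``immediate consequence'' silently requires.
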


\textbf{Acknowledgments.}

The author thanks Professor Konrad Schm\"udgen for providing information on the reference \cite{SCH5}  where is established a Positivstellensatz for  an Archimedean cone, which 
is, in general,neither a semiring nor a quadratic module.

\textbf{Data availability declaration}

I do not analyze or generate any datasets because my work proceeds within a theoretical and mathematical approach. One can obtain the relevant materials from the references below.
\textbf{Ethics declaration}

The author declares no competing interests.
\bibliographystyle{amsplain}
\bibliography{references}

\end{document}.

\textbf{Ethics declaration}

The author declares no competing interests.
Using the previous result, we can obtain the following solution of the moment problem on a generalized simplex.

\begin{prop}\label{Msimplex}
Let $A$ be an unital algebra with generators $a_1,\dots,a_m$.
For a linear function  $L:A\to\mathbb R $ such that $L(1)=1$ the following conditions are equivalent :
\begin{enumerate}

\item the functional $L$ is positive semidefinite, and we have  
$$d_{a_1}\ge p_1,\dots,d_{a_m}\ge p_1,D_{a_1+\dots+a_m}\le r;$$
\item there is a unique positive Radon measure $\mu$ on 
\begin{equation*}K=\left\{\alpha\in X(A):\alpha(a_1)\ge p_1,\dots\alpha(a_m)\ge p_1,\alpha(a_1)+\dots+\alpha(a_m)\le r\right\}\end{equation*}
such that
$$L(a)=\int_K\alpha(a)d\mu(\alpha), a\in A.$$
\end{enumerate}
\end{prop}